\documentclass[a4paper,12pt,reqno]{amsart}
\usepackage[utf8]{inputenc}
\usepackage{amssymb}
\usepackage{amsmath}
\usepackage{amsthm}
\usepackage{enumerate}
\usepackage{hyperref}
\hypersetup{
	colorlinks=true,
	linkcolor=black,
	linkbordercolor={1 1 1},
	citecolor=black
}

\makeatletter
\@namedef{subjclassname@2020}{\textup{2020} Mathematics Subject Classification}
\makeatother

\setlength{\textheight}{23cm} \setlength{\textwidth}{16cm}
\setlength{\hoffset}{-1.3cm} \setlength{\voffset}{-1.8cm}

\newtheorem{thm}{Theorem}
\newtheorem{lem}[thm]{Lemma}
\newtheorem{cor}[thm]{Corollary}
\newtheorem{prop}[thm]{Proposition}
\newtheorem{proposition}[thm]{Proposition}

\theoremstyle{definition}
\newtheorem{rem}[thm]{Remark}

\linespread{1.2}

%%%%%%%%%%%
% Symbols %
%%%%%%%%%%%

\newcommand{\cE}{\mathcal{E}}
\newcommand{\cK}{\mathcal{K}}
\newcommand{\cP}{\mathcal{P}}

%mathbb

\newcommand{\R}{\mathbb{R}}

\newcommand{\IB}{\mathbb{B}}

\newcommand{\IE}{\mathbb{E}}
\newcommand{\IH}{\mathbb{H}}

\newcommand{\IN}{\mathbb{N}}
\newcommand{\IP}{\mathbb{P}}

\newcommand{\IR}{\mathbb{R}}
\newcommand{\Sp}{\mathbb{S}}
\newcommand{\bS}{\mathbb{S}}

\newcommand{\bc}{\mathrm{bar}}
\newcommand{\Cov}{\mathrm{Cov}}
\DeclareMathOperator{\cl}{cl}

\DeclareMathOperator{\Int}{int}
\newcommand{\vol}{\mathrm{vol}}
\newcommand{\Zon}{\mathrm{Zon}}
\newcommand{\tr}{\mathrm{tr}}
\newcommand{\dd}{\mathrm{d}}
\newcommand{\Ent}{\mathrm{Ent}}

\title[Strange shadows of $\ell_p$-balls]{Strange shadows of $\ell_p$-balls}
\author{Zakhar Kabluchko}
\address{Zakhar Kabluchko, Institute of Mathematical Stochastics, University of M\"unster, Orl\'eans-Ring 10, 48149 M\"unster, Germany}

\email{zakhar.kabluchko@uni-muenster.de}

\author{Mathias Sonnleitner}
\address{Mathias Sonnleitner, Institute of Mathematical Stochastics, University of M\"unster, Orl\'eans-Ring 10, 48149 M\"unster, Germany and Department of Mathematical and Statistical Sciences, University of Alberta, Edmonton, AB, Canada, T6G 2G1}

\email{mathias.sonnleitner@ualberta.ca}

\subjclass[2020]{52A23  (52A21, 52A22, 60F10) }
\keywords{random projection, random section, large deviations, maximum entropy}
\date{\today}

\begin{document}

\begin{abstract}
	
	\noindent We prove a large deviations principle for orthogonal projections of the unit ball $\mathbb{B}_p^n$ of $\ell_p^n$ onto a random $k$-dimensional linear subspace of $\mathbb{R}^n$ as $n\to\infty$ in the case $2<p\le \infty$ and for the intersection of $\mathbb{B}_p^n$ with a random $k$-dimensional subspace in the case $1\le p <2$. The corresponding rate function is finite only on $L_q$-zonoids and their duals, respectively, and given in terms of the maximum entropy over suitable measures generating the $L_q$-zonoid, where $\frac{1}{p}+\frac{1}{q}=1$. In particular, we obtain that the renormalized projections/sections almost surely tend to a $k$-dimensional Euclidean ball of certain radius. Moreover, we identify the asymptotic probability that the random orthogonal projection remains within a ball of smaller radius. As a byproduct we obtain an interesting inequality for the Gamma function.
\end{abstract}

\maketitle

\section{Introduction and main results}

Let $\IB_p^n:=\{x\in\IR^n\colon \sum_{i=1}^{n}|x_i|^p\le 1\}$ be the unit ball of $\ell_p^n$, $1\le p<\infty$, and $\IB_{\infty}^n=[-1,1]^n$. Via orthogonal projection onto a $k$-dimensional subspace, $k\le n$, the $\ell_p$-ball $\IB_p^n$ casts a $k$-dimensional shadow. This terminology is also used in Ball's \textit{Shadows of Convex Bodies} \cite{Bal91}, where the author bounds the volume of a convex body by an average of volumes of some of its shadows. By a theorem of Dvoretzky-Milman~\cite[Cor.~7.24]{AS17}, as the dimension $n$ grows, most such shadows tend to be very close to the Euclidean ball $\IB_2^k$, after suitable renormalization. Thus, we might say that $\IB_2^k$ is the typical shadow of $\IB_p^n$. We are interested in shadows which are unlikely, \textit{strange} so to speak, occurring with exponentially small (in $n$) probability. This can be investigated within the framework of large deviations theory.

A classical result, which goes back to Maxwell, Borel and Poincar\'e, states that the coordinates of a random vector $X_n$ which is uniformly distributed in $n^{1/2}\IB_2^n$ become approximately Gaussian as $n\to\infty$. Due to \cite{RR91,SZ90} this also holds true for $n^{1/p}\IB_p^n$ but in this case the limit density is proportional to $e^{-|x|^p/p}$ for $p<\infty$ and the uniform density on $[-1,1]$ for $p=\infty$. In contrast, Klartag's central limit theorem for convex bodies \cite{Kla07} shows that for any fixed $k\in \IN$ most $k$-dimensional projections of $X_n$ become approximately Gaussian. Thus, in some sense, if $p\neq 2$, coordinate projections of $X_n$ are atypical. From a large deviations perspective this atypicality has been investigated in \cite{GKR16} and \cite{GKR17}, which motivated large deviation theorems for norms of $X_n$ in \cite{APT18,KPT19} and triggered a body of work, partly summarized in the surveys \cite{Pro24,PTT19}. For large deviations of random projections of the cube $\IB_{\infty}^n$, two more perspectives in terms of random measures have been considered in \cite{JKP22,KPT21}. On a related note, in \cite{PPZ14} a central limit theorem is proven for the volume of random projections of $\IB_{\infty}^n$. This has recently been extended to $p>1$ in \cite{PTT24} where also large deviation principles are proven. In the following, we provide yet another large deviation perspective of random projections of $\ell_p$-balls, which is in terms of the shape of the random projections of $\IB_p^n$ itself, rather than a distribution defined on it. This allows for a detailed description of the set of strange shadows of $\IB_p^n$.

More precisely, we are interested in large deviations of the sequence of random $k$-dimensional sets
\[
Z_{n,p}:=n^{1/p-1/2}\Pi_{n,k}^{\top}\mathbb{B}_p^n, \qquad n\ge k,
\]
where $k\in\IN$ is fixed and $\Pi_{n,k}$ is a random $n\times k$-matrix whose columns form an orthonormal $k$-frame uniformly distributed on the Stiefel manifold $\mathbb{V}_{n,k}$, the set of all such frames in $\IR^n$, with respect to Haar measure. That is, the first column $u_1$ of $\Pi_{n,k}$ is uniformly distributed on the unit sphere $\Sp^{n-1}$, the second column $u_2$ is uniformly distributed on $u_1^{\perp}\cap \Sp^{n-1}$, and so on. Then $E_{n,k}=\Pi_{n,k}(\IR^k)$ is a random $k$-dimensional subspace of $\IR^n$ uniformly distributed on the Grassmannian $\mathbb{G}_{n,k}$, the set of all such subspaces, with respect to Haar measure. Moreover, the composition of the isometry $\Pi_{n,k}^{\top}\colon E_{n,k}\to \IR^k$ and the orthogonal projection $P_{E_{n,k}}\colon \IR^n\to E_{n,k}$ onto $E_{n,k}$ satisfies $\Pi_{n,k}^{\top}P_{E_{n,k}}=\Pi_{n,k}^{\top}$. Thus, the action of $\Pi_{n,k}^{\top}$ can indeed be understood as taking orthogonal projection onto a random subspace and then mapping isometrically to $\IR^k$.  

The object $Z_{n,p}$ is a random element of the space $\mathcal{K}^k$ of convex bodies in $\IR^k$, that is, nonempty compact convex sets, which is equipped with the Hausdorff metric 
\[
\delta_H(K,L)
=\max\Big\{\sup_{x\in K}\inf_{y\in L}\|x-y\|,\sup_{x\in L}\inf_{y\in K}\|x-y\|\Big\},\quad K,L\in \cK^k.
\]
As a random linear image of $\IB_p^n$, the set $Z_{n,p}$ is a random compact set taking values in $\mathcal{K}^k$ in the sense of \cite{Mat75} and \cite{Mol05}. Note that for our purposes a convex body is a nonempty compact convex set which may have empty interior, see~\cite{Sch14}.

The following class of convex bodies will be essential for the study of the random convex body $Z_{n,p}$. Let $1<p\le \infty$ and let $1\le q<\infty$ be the Hölder-conjugate index with $\frac{1}{p}+\frac{1}{q}=1$, where $\frac{1}{\infty}=0$.  An $L_q$-zonoid (also sometimes called $L_q$-centroid body) is a convex body $K\in \cK^k$ with support function
\begin{equation} \label{eq:q-zonoid}
h(K,u)
:=\sup_{x\in K}\langle x,u\rangle
=\Big(\int_{\IR^k}|\langle x,u\rangle|^q \dd \mu(x)\Big)^{1/q},\qquad u\in \IR^k,
\end{equation}
for some Borel probability measure $\mu\in \cP_q(\IR^k)$ on $\IR^k$, where 
\[
\cP_q(\IR^k)
=\Big\{\mu\in \cP(\IR^k)\colon \int_{\IR^k}\|x\|_2^q\dd \mu(x)<\infty\Big\}
\]
and $\cP(\IR^k)$ is the set of all Borel probability measures on $\IR^k$. In this case, we write $K=Z_q(\mu)$ for the $L_q$-zonoid generated by $\mu$ and we denote by
\[
\Zon_{k,q}
=\{Z_q(\mu)\colon \mu\in\cP_q(\IR^k)\}
\]
the set of all $L_q$-zonoids in $\IR^k$. Then the random set $Z_{n,p}$ is a random $L_q$-zonoid, i.e. takes values in $\Zon_{k,q}$, see Proposition~\ref{pro:proj-zonoid} below. 

We will prove a large deviation principle (LDP) for $k$-dimensional random projections and sections of $\IB_p^n$ in the complete separable metric space $(\cK^k,\delta_H)$. Here, a sequence of random convex bodies $(K_n)_{n\in\IN}$ satisfies an LDP with lower semicontinuous rate function $I\colon \cK^k\to [0,\infty]$ if for all Borel sets $A\subset \cK^k$ it holds that
\begin{align*} 
-\inf_{K\in \Int A}I(K)
&\le \liminf_{n\to\infty} n^{-1} \log \IP[K_n\in A]\\
&\le \limsup_{n\to\infty} n^{-1} \log \IP[K_n\in A]
\le -\inf_{K\in \cl A}I(K),
\end{align*}
where $\Int A$ (respectively, $\cl A$) denotes the interior (respectively, closure) of $A$ and $\inf \emptyset=\infty$. A rate function is good if it additionally has compact sublevel sets. We refer to \cite{DZ10} for a background on the theory of large deviations. The rate functions in our main results involve the (differential) entropy which, for $\mu\in \cP(\IR^k)$, is given by
\begin{equation*} 
{\rm Ent}(\mu)
=	-\int_{\IR^k}\log(f(x))f(x)\dd x,
\end{equation*}
whenever $\mu$ admits a Lebesgue density $f$ and ${\rm Ent}(\mu)=-\infty$ otherwise. Our main result for random projections of $\IB_p^n$ reads as follows.

\begin{thm}\label{thm:proj}
	Let $2<p\le \infty$ and let $1\le q<2$ with $\frac{1}{p}+\frac{1}{q}=1$. Fix $k\in \IN$. The sequence $Z_{n,p}=n^{1/p-1/2}\Pi_{n,k}^{\top} \IB_p^n$, $n\ge k$, satisfies an LDP in $\cK^k$ with good rate function 
\[
I_p(K)
=\inf\{\Ent(\gamma^{\otimes k})-\Ent(\mu)\colon \mu\in\cP_q(\IR^k) \text{ with } Z_q(\mu)=K \text{ and } Z_2(\mu)\subset \IB_2^k\},
\]
where $K\in \cK^k$ and $\gamma^{\otimes k}$ denotes the $k$-dimensional standard Gaussian distribution. 
\end{thm}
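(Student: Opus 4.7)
The plan is to recognize $Z_{n,p}$ as an $L_q$-zonoid generated by the empirical measure of the rescaled rows of $\Pi_{n,k}$, then derive the LDP via Sanov's theorem for i.i.d.\ Gaussian samples together with the contraction principle.

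\textbf{Step 1 (empirical-measure representation).} Writing $v_1,\ldots,v_n\in\IR^k$ for the rows of $\Pi_{n,k}$ and setting $w_i:=\sqrt n\,v_i$, a direct computation of the support function yields
\[
h(Z_{n,p},u)
= n^{1/p-1/2}\|\Pi_{n,k}u\|_q
= \left(\int_{\IR^k}|\langle x,u\rangle|^q\,\dd\mu_n(x)\right)^{1/q},\qquad \mu_n:=\frac{1}{n}\sum_{i=1}^n\delta_{w_i}.
\]
Thus $Z_{n,p}=Z_q(\mu_n)$ (matching Proposition~\ref{pro:proj-zonoid}), and the identity $\Pi_{n,k}^\top\Pi_{n,k}=I_k$ forces the hard constraint $\int xx^\top\,\dd\mu_n=I_k$, so the empirical generator always has identity covariance.

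\textbf{Step 2 (Sanov and contraction).} Using the Gaussian representation $\Pi_{n,k}=G(G^\top G)^{-1/2}$ with $G\in\IR^{n\times k}$ having i.i.d.\ standard Gaussian entries, one checks $w_i=A_ng_i$ for $A_n:=(n^{-1}G^\top G)^{-1/2}\to I_k$ almost surely; equivalently $\mu_n=T(\nu_n)$, where $\nu_n:=\tfrac1n\sum\delta_{g_i}$ is the empirical measure of i.i.d.\ $\gamma^{\otimes k}$ samples and $T(\nu):=\bigl((\int xx^\top\,\dd\nu)^{-1/2}\bigr)_*\nu$ standardizes a measure to identity covariance. Sanov's theorem gives an LDP for $\nu_n$ with good rate function $H(\cdot|\gamma^{\otimes k})$ in a topology fine enough to control the second moment. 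Because $1\le q<2$, second-moment convergence uniformly controls $q$-th moments in every direction, so $\nu\mapsto Z_q(T(\nu))$ is continuous on the effective domain (with the degenerate-covariance boundary handled by a separate exponential tightness estimate). The contraction principle then yields an LDP for $Z_{n,p}=Z_q(T(\nu_n))$ with rate
\[
I_p(K)=\inf\{H(\nu|\gamma^{\otimes k}):Z_q(T(\nu))=K\}.
\]

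\textbf{Step 3 (identification of the rate function).} For fixed $\mu$ with $\int xx^\top\,\dd\mu=I_k$ and $Z_q(\mu)=K$, the fiber $T^{-1}(\mu)$ is parametrized by $\nu=B_*\mu$ with $B\in\mathrm{PD}_k$. Using the change-of-variable identity $\Ent(B_*\mu)=\Ent(\mu)+\log\det B$ together with the explicit Gaussian density one obtains
\[
H(B_*\mu|\gamma^{\otimes k})=-\Ent(\mu)+\frac{k}{2}\log(2\pi)-\log\det B+\frac{1}{2}\tr(B^2),
\]
whose gradient in $B$ vanishes precisely at $B=I_k$, giving minimum value $\Ent(\gamma^{\otimes k})-\Ent(\mu)$. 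Hence $I_p(K)$ reduces to the infimum of $\Ent(\gamma^{\otimes k})-\Ent(\mu)$ over $\mu$ with $Z_q(\mu)=K$ and identity covariance. A final approximation argument — mixing a measure $\mu_0$ of smaller covariance with a small amount of a wide Gaussian to restore identity covariance (permissible because $q<2$ makes the induced change in $Z_q$ vanish while entropy can only be increased by mixing with a Gaussian) — shows the infimum is unchanged when the constraint is relaxed to the closed set $\{Z_2(\mu)\subset\IB_2^k\}$, yielding the stated rate function.

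\textbf{Main obstacle.} The principal difficulty is topological: Sanov's theorem and the ensuing contraction must be carried out in a space simultaneously strong enough for $Z_q\circ T$ to be continuous and weak enough for a good rate function, while the degenerate-covariance boundary (where $T$ is singular) has to be ruled out by exponential tightness of $Z_{n,p}$ in $(\cK^k,\delta_H)$. The final approximation identifying the rate function under the relaxed constraint $Z_2(\mu)\subset\IB_2^k$ is also delicate at the boundary of the set of $L_q$-zonoids admitting identity-covariance generators.
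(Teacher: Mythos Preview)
Your Step~1 matches the paper exactly, and the overall architecture (empirical-measure representation, then contraction through $Z_q$) is the same. The divergence is in Step~2: the paper does \emph{not} derive the LDP for $\mu_n$ from Sanov for the i.i.d.\ Gaussian empirical measure $\nu_n$ and a standardization map $T$. Instead it cites the Kim--Ramanan theorem (\cite{KR23}, stated as Proposition~\ref{pro:measures}), which proves directly that the \emph{constrained} empirical measure $L_n=\mu_n$ satisfies an LDP in $\cP_q(\IR^k)$ with good rate $\IH_k(\mu)=H(\mu|\gamma^{\otimes k})+\tfrac12\tr(I-\Cov(\mu))$ on $\{\Cov(\mu)\le I\}$. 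The paper then applies the contraction principle only once, through the continuous map $Z_q$, and simplifies $\IH_k$ algebraically (plus a symmetrization to kill the barycenter) to reach the stated $I_p$.

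Your Step~2 contains a genuine gap, which you yourself flag as the ``main obstacle'' but do not close. The map $T(\nu)=(\int xx^\top d\nu)^{-1/2}{}_*\nu$ requires second moments; it is neither continuous nor even everywhere defined on the effective domain of $H(\cdot|\gamma^{\otimes k})$ in the $q$-Wasserstein topology (there exist $\nu$ with $H(\nu|\gamma^{\otimes k})<\infty$ and $\int\|x\|^2d\nu=\infty$). Upgrading Sanov to the $2$-Wasserstein topology, where $T$ would be continuous, runs into the failure of the standard exponential-integrability condition $\int e^{t\|x\|^2}d\gamma^{\otimes k}<\infty$ for all $t>0$. Resolving this --- controlling second moments while keeping a good rate function --- is precisely the content of the Kim--Ramanan result, so your route effectively asks you to reprove that theorem rather than bypass it.

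A smaller issue: your final approximation in Step~3 does not work as written. Mixing $\mu_0$ with a wide Gaussian changes $Z_q(\mu)$ for every fixed positive mixture weight; the only way to ``make the induced change in $Z_q$ vanish'' is a limiting procedure with weight $\epsilon\to 0$ and variance $\sigma^2\to\infty$ so that $\epsilon\sigma^2$ stays bounded while $\epsilon\sigma^q\to 0$. That produces a \emph{sequence} with $Z_q\to K$ and covariance $\to I_k$, not a single $\mu$ with $Z_q(\mu)=K$ and $\Cov(\mu)=I_k$, so you would still need a lower-semicontinuity/limit argument. The paper avoids this entirely because $\IH_k$ already carries the inequality constraint $\Cov(\mu)\le I_k$, and the only simplification needed is the symmetrization $\tilde\mu(A)=\tfrac12(\mu(A)+\mu(-A))$, which preserves $Z_q$, does not increase $\IH_k$, and removes the barycenter term.
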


Large deviation principles for Minkowski sums of i.i.d. random compact sets have been proven in \cite{Cer99}, and more recently in \cite{KM14,Mia09,MPS11}. In contrast, the setting studied here is related to $q$-sums of random convex sets, see Remark~\ref{rem:q-sum}. Our approach to prove Theorem~\ref{thm:proj} is based on a large deviation principle for the empirical measure of the rows of the random matrix $\Pi_{n,k}^{\top}$, recently established by Kim and Ramanan in \cite[Thm.~2.8]{KR23}.

\begin{rem}\label{rem:finite-rate}
Theorem~\ref{thm:proj} yields a description of the set of strange shadows. Let $2<p\le \infty$ and $K\in \cK^k$ with $0<I_p(K)<\infty$ with $I_p$ as in Theorem~\ref{thm:proj}. Then $K$ belongs to $\Zon_{k,q}$ and admits an absolutely continuous generating measure $\mu\in \cP_q(\IR^k)$ with $Z_q(\mu)=K$ and $Z_2(\mu)\subset \IB_2^k$. The set of all such $K$ can be interpreted as the set of all possible shadows of the ball $\IB_p^n$ that occur with probability exponentially small in $n$. (Shadows with $I_p(K)=\infty$ occur with probability super-exponentially small in $n$.) Further, the body $K$ has nonempty interior, see e.g.~Proposition~\ref{pro:interior} below. By Lyapunov's inequality, we have $Z_q(\mu)\subset Z_2(\mu)$ and thus $K\subset \IB_2^k$. Additionally, if $K=\alpha\IB_2^k$ for some $\alpha\in (0,1]$, then it holds that $\alpha\le a_{k,q}^{1/q}\sqrt{k}<1$, where $a_{k,q}$ is as in \eqref{eq:akq} in Section~\ref{sec:small-ball} below, see Remark~\ref{rem:akq}. 
\end{rem}

The geometric condition $Z_2(\mu)\subset \IB_2^k$ in Theorem~\ref{thm:proj} is equivalent to 
\begin{equation} \label{eq:covariance}
\Cov(\mu)
:=[\Cov_{X\sim \mu}(X_i,X_j)]_{i,j=1}^k
\le I_{k\times k},
\end{equation}
where $X\sim \mu$ means that $X$ has distribution $\mu$, $\le$ denotes the Loewner order on the cone of positive semi-definite matrices and $I_{k\times k}$ the identity matrix. It is well known that $\gamma^{\otimes k}$ is the unique maximum entropy distribution given \eqref{eq:covariance} and that $\Ent(\gamma^{\otimes k})=\frac{k}{2}\log(2\pi e)$, see also Section~\ref{sec:small-ball}. Therefore, for $K\in\cK^k$,
\begin{equation} \label{eq:alternative-rate}
I_p(K)
=\frac{k}{2}\log(2\pi e)-\sup\{\Ent(\mu)\colon \mu\in\cP_q(\IR^k) \text{ with } Z_q(\mu)=K, Z_2(\mu)\subset \IB_2^k\},
\end{equation}
and it holds that $I_p(K)\ge 0$ with $I_p(K)=0$ if and only if $K =Z_q(\gamma^{\otimes k})$. Rotational invariance of $\gamma^{\otimes k}$ implies that $Z_q(\gamma^{\otimes k})=m_q \IB_2^k$ with
\begin{equation} \label{eq:ball-min}
	m_{q}:=(\IE_{X\sim \gamma}|X|^{q})^{1/q}=\sqrt{2}\bigg(\frac{\Gamma(\frac{q+1}{2})}{\sqrt{\pi}}\bigg)^{1/q}, 
\end{equation}
where $\gamma$ is the one-dimensional standard Gaussian distribution and $\Gamma(x)=\int_0^{\infty}t^{x-1}e^{-t}\dd t$, $x>0$, denotes the Gamma function.

Together with the LDP in Theorem~\ref{thm:proj} we obtain the following corollary on the convergence of the random shadows $Z_{n,p}$, $n\ge k$, to the typical shadow $m_q \IB_2^k$.

\begin{cor}\label{cor:lln-proj}
	Let $2< p\le \infty$. The sequence $Z_{n,p}=n^{1/p-1/2}\Pi_{n,k}^{\top} \IB_p^n$, $n\ge k$, satisfies 
\begin{equation} \label{eq:lln-proj}
Z_{n,p}\xrightarrow[n\to\infty]{\rm a.s.} m_q \IB_2^k, 
\end{equation}
where almost sure convergence is in Hausdorff distance and $m_q$ is as in \eqref{eq:ball-min}.
\end{cor}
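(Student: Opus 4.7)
The plan is to derive the almost sure convergence from the LDP in Theorem~\ref{thm:proj} by a standard Borel--Cantelli argument, once we identify the unique zero of the rate function $I_p$.

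First, from the alternative formula \eqref{eq:alternative-rate} it is stated in the excerpt that $I_p(K)\ge 0$ with equality if and only if $K=Z_q(\gamma^{\otimes k})$, and that $Z_q(\gamma^{\otimes k})=m_q\IB_2^k$ by the rotational invariance of $\gamma^{\otimes k}$ together with \eqref{eq:ball-min}. Hence $m_q\IB_2^k$ is the unique minimizer of $I_p$ on $\cK^k$, and $I_p(m_q\IB_2^k)=0$.

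Next, fix $\varepsilon>0$ and consider the closed set
\[
A_\varepsilon=\{K\in\cK^k\colon \delta_H(K,m_q\IB_2^k)\ge \varepsilon\}.
\]
Since $I_p$ is a good rate function, its sublevel sets $\{I_p\le c\}$ are compact in $(\cK^k,\delta_H)$; combined with lower semicontinuity, this means the infimum $c_\varepsilon:=\inf_{K\in A_\varepsilon}I_p(K)$ is attained on $A_\varepsilon\cap\{I_p\le c_\varepsilon+1\}$, which is compact. If we had $c_\varepsilon=0$, the minimizer would be a zero of $I_p$ lying in $A_\varepsilon$, contradicting the fact that $m_q\IB_2^k$ is the unique zero. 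Hence $c_\varepsilon>0$. Applying the upper-bound half of the LDP from Theorem~\ref{thm:proj} to the closed set $A_\varepsilon$ gives
\[
\limsup_{n\to\infty}\frac{1}{n}\log\IP[Z_{n,p}\in A_\varepsilon]\le -c_\varepsilon<0.
\]
In particular $\sum_{n\ge k}\IP[Z_{n,p}\in A_\varepsilon]<\infty$.

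Finally, realize all $Z_{n,p}$, $n\ge k$, on a common probability space (for instance, by constructing $\Pi_{n,k}$ from the first $k$ columns of an orthogonalization of an infinite i.i.d.\ Gaussian matrix). The Borel--Cantelli lemma then yields $\IP[\delta_H(Z_{n,p},m_q\IB_2^k)\ge \varepsilon \text{ infinitely often}]=0$ for every $\varepsilon>0$, and taking $\varepsilon$ through a countable sequence tending to $0$ gives \eqref{eq:lln-proj}.

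I do not expect a genuine obstacle here; the derivation is a textbook consequence of the LDP, goodness of $I_p$, and the identification of $m_q\IB_2^k$ as the unique zero of $I_p$. The only mildly delicate point is the coupling needed to make sense of almost sure convergence across different $n$, but this is routine.
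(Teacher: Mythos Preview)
Your proposal is correct and follows exactly the approach the paper indicates: the corollary is presented as an immediate consequence of the LDP in Theorem~\ref{thm:proj} together with the identification of $m_q\IB_2^k$ as the unique zero of the good rate function $I_p$, and your Borel--Cantelli argument is the standard way to extract almost sure convergence from this. The paper does not spell out the details (it only sketches an alternative Dvoretzky--Milman route in Remark~\ref{rem:dvoretzky}), so your write-up is precisely what is implicitly intended.
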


\begin{rem}\label{rem:dvoretzky}
	In the case of $p=\infty$, Corollary~\ref{cor:lln-proj} was obtained in \cite[Thm.~1.1]{KPT21}, where in Rem.~1.2 a proof using the Dvoretzky-Milman theorem in the form of \cite[Cor.~7.24]{AS17} is sketched. The proof of \cite[Rem.~1.2]{KPT21} can be extended to show that \eqref{eq:lln-proj} holds also for $1< p<\infty$. In this respect, note that the Dvoretzky-Milman theorem yields $c,c'>0$ such that for any convex body $K\subset\IR^n$ with nonempty interior, $\varepsilon>0$ and $k\le c\varepsilon^2 k_*$, it holds that, with probability at least $1-e^{-c'\varepsilon^2k_*}$, 
\[
(1-\varepsilon)w(K)\IB_2^k \subset \Pi_{n,k}^{\top}K \subset(1+\varepsilon)w(K)\IB_2^k,
\]
Here, $k_*$ is the critical/Dvoretzky dimension of the polar of $K$ and $w(K) =\int_{\Sp^{n-1}} h(K,u)\dd\sigma(u)$ is the half-mean width, where $\sigma$ is the rotation invariant probability measure on $\Sp^{n-1}$.  It is known (see e.g.~\cite[Sec.~7.2.4]{AS17}) that, for $g_1,\dots,g_n\overset{\rm i.i.d.}{\sim} \gamma$, 
\begin{equation} \label{eq:mean-width}
w(n^{1/p-1/2}\IB_p^n)
=\frac{\IE(\frac{1}{n}\sum_{i=1}^{n} |g_i|^q)^{1/q}}{\IE(\frac{1}{n}\sum_{i=1}^{n} |g_i|^2)^{1/2}}
\to \frac{m_q}{m_2}
=m_q,
\end{equation}
as $n\to\infty$ and that in this case $k_*$ grows like $n^{\min\{1,2/q\}}$. For $1<p<\infty$ and fixed $k\in\IN$, we can thus take $\varepsilon>0$ arbitrarily small as $n\to\infty$, and we obtain \eqref{eq:lln-proj} with a Borel-Cantelli argument. 
\end{rem}

By duality, we can obtain a result for the intersection of $\IB_p^n$ with a random $k$-dimensional subspace $E_{n,k}$, uniformly distributed on the Grassmannian $\mathbb{G}_{n,k}$ with respect to Haar measure. Let $\cK^k_{(o)}\subset \cK^k$ be the set of convex bodies containing the origin in their interior. For any $K\in \cK^k_{(o)}$, the polar or dual body is defined by
\[
K^{\circ}=\{x\in\IR^k\colon \langle x,y\rangle \le 1 \text{ for all }y\in K\}.
\]
By Proposition~\ref{pro:dual} below, it holds that 
\begin{equation} \label{eq:duality-distribution}
	K_{n,q}:=\Pi_{n,k}^{\top}(n^{1/q-1/2}\IB_q^n\cap E_{n,k})\overset{\rm d}{=} Z_{n,p}^{\circ},\qquad n\ge k,
\end{equation}
where $\Pi_{n,k}^{\top}\colon E_{n,k}\to \IR^k$ is an isometry and the polar $Z_{n,p}^{\circ}$ is well-defined almost surely. The following LDP is our main result for random sections.

\begin{thm}\label{thm:section}
	Let $1\le q<2$ and $2<p\le \infty$ with $\frac{1}{p}+\frac{1}{q}=1$. Fix $k\in \IN$. The sequence $K_{n,q}=\Pi_{n,k}^{\top}(n^{1/q-1/2}\IB_q^n\cap E_{n,k})$, $n\ge k$, satisfies an LDP in $\cK^k_{(o)}$	with good rate function
\[
J_q(K)=I_{p}(K^{\circ}),
\]
where $K\in \cK^{k}_{(o)}$ with polar $K^\circ\in \cK^k_{(o)}$.
\end{thm}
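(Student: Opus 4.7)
The plan is to deduce Theorem~\ref{thm:section} from Theorem~\ref{thm:proj} by invoking the distributional identity $K_{n,q}\overset{\rm d}{=}Z_{n,p}^{\circ}$ of \eqref{eq:duality-distribution} and then applying the contraction principle with the polarity map. It thus suffices to prove an LDP for $(Z_{n,p}^{\circ})_{n\ge k}$ on $\cK^k_{(o)}$ with rate function $K\mapsto I_p(K^{\circ})$.

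To set this up, first note that $\Pi_{n,k}^{\top}$ has full rank $k$ almost surely, so $Z_{n,p}$ lies in $\cK^k_{(o)}$ a.s., and thus $Z_{n,p}^{\circ}$ is well-defined a.s. The polarity map $\tau\colon \cK^k_{(o)}\to \cK^k_{(o)}$, $\tau(K)=K^{\circ}$, is a homeomorphism with respect to the Hausdorff metric (a classical fact, see e.g.~\cite{Sch14}). Moreover, by Remark~\ref{rem:finite-rate}, every $K\in \cK^k$ with $I_p(K)<\infty$ lies in $\cK^k_{(o)}$: bodies of positive finite rate have nonempty interior and are centrally symmetric $L_q$-zonoids (hence contain the origin in their interior), while the unique zero-rate body $m_q\IB_2^k$ is evidently in $\cK^k_{(o)}$.

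Since $\cK^k_{(o)}$ is open in $\cK^k$ and $I_p\equiv\infty$ on the complement, the LDP of Theorem~\ref{thm:proj} restricts to an LDP for $(Z_{n,p})_{n\ge k}$ on the open subspace $\cK^k_{(o)}$ with the same good rate function: for any Borel $A\subset \cK^k_{(o)}$, its interior in $\cK^k_{(o)}$ coincides with its interior in $\cK^k$, while its closures differ only by points where $I_p=\infty$, so both LDP bounds transfer. The contraction principle (\cite[Thm.~4.2.1]{DZ10}) applied to $\tau$ then yields an LDP for $(\tau(Z_{n,p}))_{n\ge k}$ in $\cK^k_{(o)}$ with rate function $L\mapsto \inf\{I_p(K)\colon \tau(K)=L\}$. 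As $\tau$ is an involution, this infimum equals $I_p(L^{\circ})=J_q(L)$, and goodness persists because $\{J_q\le\alpha\}=\tau(\{I_p\le\alpha\})$ is a continuous image of a compact set. Combining with \eqref{eq:duality-distribution} yields the claimed LDP. The only mildly delicate step is the passage from $\cK^k$ to the open subspace $\cK^k_{(o)}$; this is routine once one observes that all finite-rate configurations lie inside $\cK^k_{(o)}$, so the topological mismatch between the two spaces never affects the LDP bounds.
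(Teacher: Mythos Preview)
Your proof is correct and follows essentially the same route as the paper: restrict the LDP of Theorem~\ref{thm:proj} from $\cK^k$ to $\cK^k_{(o)}$ using that $\{I_p<\infty\}\subset\cK^k_{(o)}$, then apply the contraction principle with the polarity map and invoke \eqref{eq:duality-distribution}. The only cosmetic differences are that the paper establishes continuity of $K\mapsto K^\circ$ on $\cK^k_{(o)}$ directly (via uniform convergence of radial functions) rather than citing it, and handles the restriction to $\cK^k_{(o)}$ via \cite[Lem.~4.1.5]{DZ10} and measurability rather than your (equally valid) openness argument.
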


As a consequence of $(m_q\IB_2^k)^{\circ}=m_q^{-1}\IB_2^k$ we obtain the following corollary on the convergence of the random sections $K_{n,q}$, $n\ge k$, to the typical section $m_q^{-1}\IB_2^k$.

\begin{cor}\label{cor:lln-sec}
	Let $1\le q<2$. The sequence $K_{n,q}=\Pi_{n,k}^{\top}(n^{1/q-1/2}\IB_q^n\cap E_{n,k})$, $n\ge k$, satisfies
\[
K_{n,q}\xrightarrow[n\to\infty]{\rm a.s.} m_q^{-1} \IB_2^k, 
\]
where almost sure convergence is in Hausdorff distance and $m_q$ is as in \eqref{eq:ball-min}.
\end{cor}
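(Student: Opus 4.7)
The plan is to deduce the statement from Corollary~\ref{cor:lln-proj} via the duality between random projections and random sections encoded by \eqref{eq:duality-distribution}. The key observation is that on the common probability space carrying the random frame $\Pi_{n,k}$, the distributional identity \eqref{eq:duality-distribution} is in fact a pointwise equality: the body $Z_{n,p}=n^{1/p-1/2}\Pi_{n,k}^{\top}\IB_p^n$ is a linear image of the full-dimensional body $\IB_p^n$ under the surjective map $\Pi_{n,k}^{\top}\colon \IR^n\to\IR^k$, hence contains the origin in its interior, so that $Z_{n,p}^{\circ}$ is well defined for every $n\ge k$. The standard polar-projection identity $(P_{E_{n,k}}\IB_p^n)^{\circ}=\IB_q^n\cap E_{n,k}$, interpreted within the subspace $E_{n,k}$, combined with $\frac{1}{p}+\frac{1}{q}=1$ and the isometric identification $\Pi_{n,k}^{\top}\colon E_{n,k}\to\IR^k$, then yields $K_{n,q}=Z_{n,p}^{\circ}$ identically, not only in distribution.

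First I would invoke Corollary~\ref{cor:lln-proj} to conclude that $Z_{n,p}\to m_q\IB_2^k$ almost surely in Hausdorff distance. Next, I would use the fact that the polarity map $K\mapsto K^{\circ}$ is continuous on $\cK^k_{(o)}$ with respect to the Hausdorff metric at any body containing the origin in its interior, a classical property. Since the limit $m_q\IB_2^k$ has the origin deep in its interior, this continuity applies and yields $Z_{n,p}^{\circ}\to (m_q\IB_2^k)^{\circ}=m_q^{-1}\IB_2^k$ almost surely. The pointwise identity $K_{n,q}=Z_{n,p}^{\circ}$ then transports the convergence to $K_{n,q}$, finishing the argument.

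The main obstacle is not technical; all the serious work has already been carried out in establishing Corollary~\ref{cor:lln-proj}. The only points to verify carefully are (i) that the duality \eqref{eq:duality-distribution} is a pointwise identity on the common probability space, so that an almost sure statement for $Z_{n,p}$ transfers to $Z_{n,p}^{\circ}$ without passing through distributional considerations, and (ii) continuity of the polarity map in Hausdorff distance at nondegenerate bodies, which requires only that the limit contains the origin in its interior. As an alternative, one could argue directly from the LDP in Theorem~\ref{thm:section}: since the good rate function $J_q$ has $m_q^{-1}\IB_2^k$ as its unique zero, the upper bound applied to complements of neighborhoods of this body, together with a Borel--Cantelli argument, also yields almost sure convergence.
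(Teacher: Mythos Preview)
Your proposal is correct and follows exactly the approach the paper takes: the paper establishes in Section~\ref{sec:sec} the pointwise identity $K_{n,q}=Z_{n,p}^{\circ}$ via Proposition~\ref{pro:dual}, proves continuity of polarity on $\cK^k_{(o)}$, and then the corollary is stated as an immediate consequence of $(m_q\IB_2^k)^{\circ}=m_q^{-1}\IB_2^k$ together with Corollary~\ref{cor:lln-proj}. Your alternative route via Theorem~\ref{thm:section} and Borel--Cantelli is also valid and mirrors how the paper implicitly derives Corollary~\ref{cor:lln-proj} from the LDP.
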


Again, we may deduce this from the Dvoretzky-Milman theorem stated in its dual form for random sections. Note that a refinement for random sections of $\IB_p^n$ is given in \cite{PVZ17}. 

\begin{rem}
	We can infer from the LDPs in Theorems~\ref{thm:proj} and~\ref{thm:section} as well as the strong laws of large numbers in Corollaries~\ref{cor:lln-proj} and~\ref{cor:lln-sec} corresponding results for continuous functionals $L\colon \cK^k_{(o)}\to \IR$, via the contraction principle and the continuous mapping theorem, respectively. Examples include intrinsic volumes or diameter. Note that independently of us large deviations results for functionals of random projections and sections of $\IB_p^n$ are obtained in \cite{PTT24} using different techniques. Central limit theorems for the volume of random sections of fixed codimension have been established in \cite{APS24}. 
\end{rem}

\begin{rem}
Comparing the volume of random with extremal sections, it is due to Nazarov and proved in \cite{CNT22} that, for every $2$-dimensional linear subspace $H_{n,2}$ of $\IR^n$, 
\[
\vol_2(n^{1/2}\IB_1^n\cap H_{n,2})
\ge\frac{n^3\sin^3(\frac{\pi}{2n})}{\cos(\frac{\pi}{2n})},
\]
with equality when $\IB_{1}^n\cap H_{n,2}$ is a regular $2n$-gon. From above, we have the bound $\vol_2(\IB_1^n\cap H_{n,2})\le \vol_2(\IB_1^2)=2$, see \cite[Thm.~II.2]{MP88}. As $n\to\infty$, a regular $2n$-gon tends to an Euclidean ball and 
\[
\lim_{n\to\infty} \vol_2(n^{1/2}\IB_1^n\cap H_{n,2})
\ge \Big(\frac{\pi}{2}\Big)^3.
\]
It follows from Corollary~\ref{cor:lln-sec} and the continuous mapping theorem that 
\[
\vol_2(n^{1/2}\IB_1^n\cap E_{n,2})
\xrightarrow[n\to\infty]{\rm a.s.} \vol_2(m_1^{-1} \IB_2^2)
=\frac{\pi^2}{2},
\]
which makes the volume of random sections only slightly larger than for the minimal section. There is a vast literature on extremal sections and projections of convex bodies, in particular the $\ell_p$-balls $\IB_p^n$, see the survey \cite{NT23}.
\end{rem}

\begin{rem}\label{rem:one-dim}
In case of $k=1$ and $1<p\le\infty$, the random projection $Z_{n,p}$ is given by 
\begin{equation} \label{eq:one-dim}
	Z_{n,p}=[-W_{n,p},W_{n,p}] \quad \text{for } W_{n,p}=\frac{(\frac{1}{n}\sum_{i=1}^{n}|g_i|^q)^{1/q}}{(\frac{1}{n}\sum_{i=1}^{n}|g_i|^2)^{1/2}},
\end{equation}
where $g_1,\dots,g_n\overset{\rm i.i.d.}{\sim} \gamma$ and $\frac{1}{p}+\frac{1}{q}=1$, see Proposition~\ref{pro:proj-zonoid} below. In \cite[Sec.~5]{KPT19} it is proven that the sequence $(W_{n,p})_{n\in \IN}$ satisfies an LDP on $\IR$. Namely, if $p>2$ and thus $q<2$, then the rate function is given by virtue of Cram\'er's theorem in terms of the Legendre-Fenchel transform of the log-moment generating function $\Lambda(t_1,t_2)=\log \IE e^{t_1 |g_1|^q+t_2 |g_1|^2}, t_1,t_2\in \IR$. If $1<p<2$ and thus $2<q<\infty$, then $(W_{n,p})_{n\in \IN}$, satisfies an LDP with speed $n^{2/q}$ (replacing the prefactor $n$ in the above definition of LDP) and rate function 
\[
I_p^{(1)}(z)=
\begin{cases}
	\frac{1}{2}(z^q-m_q^q)^{2/q} &\colon z\ge m_q,\\
	\infty&\colon \text{otherwise,}
\end{cases}
\]
where $m_q$ is as in \eqref{eq:ball-min}. If $p=1$, then \eqref{eq:one-dim} holds with $W_{n,1}=\frac{\max_{1\le i\le n}|g_i|}{(\frac{1}{n}\sum_{i=1}^{n}|g_i|^2)^{1/2}}$. Similarly as in \cite{BKP+20} one can show that $(\frac{1}{\sqrt{2\log n}}W_{n,1})_{n\in\IN}$ satisfies an LDP with speed $\log n$ and rate function
\[
I_1^{(1)}(z)=
\begin{cases}
	z^2-1 &\colon z\ge 1,\\
	\infty&\colon \text{otherwise.}
\end{cases}
\]
Also in case of $k>1$, it seems possible to prove an LDP directly for the support function of $Z_{n,p}$, at least for $p>2$, however, with a less explicit rate function than in Theorem~\ref{thm:proj}. 
\end{rem}

\begin{rem}
	Our results for random projections and sections hold for $2< p\le \infty$ and $1\le q<2$, respectively. The case $p=q=2$ is trivial since $Z_{n,2}=\IB_2^k$, $n\ge k$. In the complementary ranges $1< p<2$ and $2\le q<\infty$, respectively, we expect different behavior in the form of large deviations with speed $n^{2/q}$. This is suggested by the Dvoretzky-Milman theorem (see Remark~\ref{rem:dvoretzky}), the one-dimensional case (see Remark~\ref{rem:one-dim}) or LDPs for functionals such as \cite[Thm.~1.2]{APT18}. The cases $p=1$ and $p=\infty$, respectively, closely resemble the setting of Gaussian polytopes but with only asymptotically independent points. This seems to require yet another approach. 
\end{rem}

As an application of the LDP in Theorem~\ref{thm:proj} we compute the asymptotic probability that the rescaled projection $Z_{n,p}$ of $\IB_p^n$ onto a random $k$-dimensional subspace lies within a Euclidean ball $\beta\IB_2^k$ of small radius $\beta>0$. By Corollary~\ref{cor:lln-proj} we have $Z_{n,p}\to m_q \IB_2^k$ and thus for $\beta<m_q$ the event $\{Z_{n,p}\subset \beta\IB_2^k\}$ is rare and occurs with exponentially small probability as specified by the following theorem. 

\begin{thm}\label{thm:small-ball}
Let $2< p\le \infty$ and $k\in \IN$. Let $1\le q<2$ with $\frac{1}{q}+\frac{1}{p}=1$ and
\[
\beta_{k,q}
=m_q\bigg(\frac{\frac{k}{q}\Gamma(\frac{k}{2})}{\Gamma(\frac{k+q}{2})}\bigg)^{1/q}
\bigg(\frac{\frac{k}{2}\Gamma(\frac{k}{q})}{\Gamma(\frac{k+2}{q})}\bigg)^{1/2}
=\delta_{k,q}^{1/q} \bigg(\frac{k\Gamma(\frac{k}{q})}{\Gamma(\frac{k+2}{q})}\bigg)^{1/2},
\]
where $m_q$ is as in \eqref{eq:ball-min} and $\delta_{k,q}$ is defined by the above equation. Then, it holds that $\beta_{k,q}< m_q$ and for every $\beta>0$, as $n\to\infty$, 
\[
\lim_{n\to\infty}n^{-1}\log\IP[Z_{n,p}\subset \beta\IB_2^k]
=c_{k,q,\beta},
\]
where for $\beta\ge m_q$, we have $c_{k,q,\beta}=0$, while for $\beta\le \beta_{k,q}$ we have
\[
c_{k,q,\beta}=k\log \beta-\frac{k}{2}\log(2\pi e)+\frac{k}{q}\Big(1-\log\delta_{k,q}\Big)+\log\Gamma\Big(\frac{k}{q}\Big)-\log q+\log\omega_k,
\]
where $\omega_k=\frac{2\pi^{k/2}}{\Gamma(\frac{k}{2})}$ is the surface area of $\IB_2^k$.
\end{thm}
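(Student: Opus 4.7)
The plan is to derive the exponential rate from Theorem~\ref{thm:proj} applied to the closed set $A_\beta:=\{K\in\cK^k\colon K\subset\beta\IB_2^k\}$ and then solve the resulting variational problem explicitly. By (\ref{eq:alternative-rate}),
\[
\inf_{K\in A_\beta}I_p(K)=\Ent(\gamma^{\otimes k})-S(\beta),\qquad S(\beta):=\sup\{\Ent(\mu)\colon Z_q(\mu)\subset\beta\IB_2^k,\ Z_2(\mu)\subset\IB_2^k\},
\]
so that $c_{k,q,\beta}=S(\beta)-\tfrac{k}{2}\log(2\pi e)$. To match the LDP lower bound I will approximate the extremiser $K^\ast$ from within $\Int A_\beta$ by rescaling the extremising measure $\mu^\ast\mapsto(1-\varepsilon)\#\mu^\ast$: this contracts both $Z_q$ and $Z_2$ by a factor $1-\varepsilon$, shifts entropy by $k\log(1-\varepsilon)\to 0$, and places the body $(1-\varepsilon)K^\ast$ strictly inside $\beta\IB_2^k$.

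For $\beta\ge m_q$ the standard Gaussian is feasible (since $Z_q(\gamma^{\otimes k})=m_q\IB_2^k$ and $Z_2(\gamma^{\otimes k})=\IB_2^k$), giving $S(\beta)=\Ent(\gamma^{\otimes k})$ and $c_{k,q,\beta}=0$. The main task is $\beta\le\beta_{k,q}$, where I claim $S(\beta)$ is attained at the rotationally symmetric probability measure $\mu^\ast$ with density
\[
f^\ast(x)=\frac{q\lambda^{k/q}}{\omega_k\Gamma(k/q)}\exp\bigl(-\lambda\|x\|_2^q\bigr),\qquad \lambda=\delta_{k,q}/\beta^q.
\]
Three steps yield this identification. (i) Both constraints are $O(k)$-invariant and $\Ent$ is concave, so averaging any feasible $\mu$ over the orthogonal group produces a rotationally symmetric feasible measure with at least the same entropy; we may therefore restrict to rotationally symmetric $\mu$. (ii) For such $\mu$, writing $X=R\Theta$ with $R=\|X\|_2$ independent of $\Theta$ uniform on $\Sp^{k-1}$, the constraint $Z_q(\mu)\subset\beta\IB_2^k$ reduces to the scalar condition $\IE R^q\le M:=\beta^q/\IE|\Theta_1|^q$, and the Gibbs-type identity
\[
\Ent(\nu)-\Ent(\mu^\ast)=-D(\nu\,\|\,\mu^\ast)+\lambda\bigl(\IE_\nu\|X\|_2^q-\IE_{\mu^\ast}\|X\|_2^q\bigr),
\]
combined with $\IE_{\mu^\ast}\|X\|_2^q=k/(q\lambda)=M$, identifies $\mu^\ast$ as the maximiser. (iii) A direct polar-coordinate calculation yields $\IE_{\mu^\ast}|X_1|^2=\lambda^{-2/q}\Gamma(\tfrac{k+2}{q})/(k\Gamma(\tfrac{k}{q}))$; substituting $\lambda=\delta_{k,q}/\beta^q$ and using $\delta_{k,q}=(k/q)\IE|\Theta_1|^q$ (which can be read off from the explicit form of $m_q$), the remaining constraint $Z_2(\mu^\ast)\subset\IB_2^k$ becomes exactly $\beta\le\beta_{k,q}$.

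Computing $\Ent(\mu^\ast)=-\log(q\lambda^{k/q}/(\omega_k\Gamma(k/q)))+k/q$ and substituting $\log\lambda=\log\delta_{k,q}-q\log\beta$ then produces the claimed formula for $c_{k,q,\beta}$. Finally, the strict inequality $\beta_{k,q}<m_q$ emerges as a byproduct: at $\beta=\beta_{k,q}$ the extremiser $\mu^\ast$ satisfies $\Cov(\mu^\ast)=I_{k\times k}$ yet is not Gaussian (since $q\ne 2$), so Shannon's uniqueness characterisation gives $\Ent(\mu^\ast)<\Ent(\gamma^{\otimes k})$, forcing $c_{k,q,\beta_{k,q}}<0$; combined with $c_{k,q,\beta}=0$ for $\beta\ge m_q$ and the monotonicity of $\beta\mapsto c_{k,q,\beta}$, we conclude $\beta_{k,q}<m_q$, precisely the Gamma-function inequality advertised in the abstract. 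The main subtlety I anticipate lies in step (ii)/(iii): verifying that the relaxation from $Z_q(\mu)\subset\beta\IB_2^k$ to the single-moment condition $\IE\|X\|_2^q\le M$ is tight, i.e.\ that its optimum $\mu^\ast$ happens to be rotationally symmetric with $Z_q(\mu^\ast)=\beta\IB_2^k$ and automatically satisfies the $Z_2$-constraint in the relevant range.
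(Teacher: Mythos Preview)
Your proof is correct and follows the same overall route as the paper: apply the LDP from Theorem~\ref{thm:proj} to $A_\beta$, rewrite $\inf_{A_\beta}I_p$ as a maximum-entropy problem over $M_\beta$, exploit rotational invariance to reduce to a radial density, and solve the resulting one-dimensional problem explicitly in the regime where only the $q$-moment constraint is active. Your rescaling $\mu^*\mapsto(1-\varepsilon)\#\mu^*$ for the continuity-set identity and your $O(k)$-averaging for rotational symmetry are minor variants of what the paper does (continuity of the optimum in $\beta$; uniqueness of the maximiser under strict concavity), and are equivalent in substance.

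The one genuine difference is your derivation of $\beta_{k,q}<m_q$. The paper isolates this as a standalone Gamma-function inequality (Proposition~\ref{pro:gamma}) and proves it via an auxiliary pair of one-dimensional max-entropy problems. You instead observe that at $\beta=\beta_{k,q}$ the extremiser $\mu^*$ has $\Cov(\mu^*)=I_{k\times k}$ but is non-Gaussian (since $q\ne 2$), invoke Shannon's uniqueness characterisation of the Gaussian to get $\Ent(\mu^*)<\Ent(\gamma^{\otimes k})$, and conclude by monotonicity of $\beta\mapsto S(\beta)$. Your argument is shorter and more direct for the theorem as stated; the paper's detour has the compensating payoff of yielding the Gamma inequality for arbitrary $x,y>0$, which the authors record as a result of independent interest.
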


In fact, the probability $\IP[Z_{n,p}\subset \beta\IB_2^k]$ is also exponentially vanishing for $\beta \in (\beta_{k,q},m_q)$ but the value of $c_{k,q,\beta}$ remains undetermined, see Section~\ref{sec:small-ball} for more information. A similar situation occurs in \cite[Sec.~4]{KR18} for the limit distribution of coordinates of a random vector uniformly distributed in an $\ell_p$-sphere and conditioned to lie in a small Euclidean ball. Note that $\beta_{k,q}\to 1=m_2$ as $q\to 2$, and so the gap $(\beta_{k,q},m_q)$ becomes smaller as $q\to 2$.

It is somewhat remarkable that, in contrast to previous large deviations results for random projections of $\ell_p$-balls in \cite{APT18,GKR17,KR23,LR24}, the explicit form of the rate function in Theorem~\ref{thm:proj} allows for a computation of the limit in Theorem~\ref{thm:small-ball} in terms of known functions. This suggests that it may be of advantage to study large deviations of geometric functionals, such as volume, of the random projections $Z_{n,p}$ from the perspective of $L_q$-zonoids and their generating measures. Note that in \cite{PP13} estimates on the small-ball probability for the volume of random zonoids were proven.

The proof of Theorem~\ref{thm:small-ball} relies on the representation of the rate function of Theorem~\ref{thm:proj} given in \eqref{eq:alternative-rate}, which involves computing a maximum entropy (probability) distribution. It is possible to compute the maximizer $\mu^*=f^*\dd x$ for special cases but seems difficult in general. However, it is conceivable that the maximizer takes the form
\[
f^*(x)=\frac{1}{Z}\exp\Big(-\int_{\Sp^{k-1}} |\langle x,u\rangle|^q \dd \nu_1(u)-\int_{\Sp^{k-1}} |\langle x,u\rangle|^2 \dd \nu_2(u)\Big),\qquad x\in \IR^k,
\]
where $\nu_1$ and $\nu_2$ are signed Borel measures on $\Sp^{k-1}$ and $Z$ is a normalization constant. For supporting arguments we refer to the proof of Theorem~\ref{thm:small-ball}. According to \cite[Thm.~3.1]{Kue73}, if $\nu_2=0$ and $\nu_1\ge 0$, then $f^*$ is the Fourier transform of a symmetric $q$-stable distribution. It would be interesting to obtain a geometric characterization of convex bodies $K$ having $f^* \dd x$ as maximum entropy distribution in the definition of $I_p(K)$. In this respect we note the following.

\begin{rem}\label{rem:q-projection}
An $L_q$-zonoid $K=Z_q(\mu)$ with $\mu\in\cP_q(\IR^k)$ may also be generated via
\begin{equation} \label{eq:spherical-gen}
h(K,u)
=\int_{\IR^k}|\langle x,u\rangle|^q \dd \mu(x)
=\int_{\Sp^{k-1}}|\langle z,u\rangle|^q\dd\nu(z),\qquad u\in \IR^k,
\end{equation}
by a measure $\nu$ assigning to a Borel set $A\subset \Sp^{k-1}$ the mass
\[
\nu(A)=\frac{1}{2}\int_{\IR^k} \|x\|_2^q \mathbf{1}_{\{x\neq 0\colon \frac{x}{\|x\|_2}\in A\cup -A\}}(x) \dd\mu(x).
\]
Koldobskii~\cite[p.~760]{Kol91} calls $\nu$ the $q$-projection of $\mu$ to $\Sp^{k-1}$, which is uniquely determined by $K$ via \eqref{eq:spherical-gen} if and only if $q\ge 1$ is not an even integer. This is well known and follows for example from \cite[Cor.~1]{Kan73}, \cite[Cor.~4]{Lin82} or \cite[Thm.~2]{Kol91}. See also \cite[Thm.~3.5.4]{Sch14} for $q=1$. According to \cite{Kan73}, uniqueness in the case $q<2$ goes back to L\'evy and relates to uniqueness of the spectral measure generating a symmetric $q$-stable distribution. Note that, if $k\ge 2$, $q>1$ is not an even integer and $\nu$ has a positive and bounded density with respect to the uniform measure $\sigma$, then $K$ given by \eqref{eq:spherical-gen} is smooth, i.e.~the boundary of $K$ is twice continuously differentiable and has everywhere positive Gauss-Kronecker curvature, see \cite[Thm.~2.2. and 3.1]{Lon03}.
\end{rem}

The following inequality on the Gamma function may be of independent interest.

\begin{prop}\label{pro:gamma}
It holds that, for any $x,y>0$,
\[
\bigg(\frac{x\Gamma(y)}{\Gamma(y+\frac{y}{x})}\bigg)^x
\bigg(\frac{y\Gamma(x)}{\Gamma(x+\frac{x}{y})}\bigg)^y\le 1,
\]
and equality holds if and only if $x=y$.
\end{prop}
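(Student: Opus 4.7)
The plan is to take logarithms and reformulate the claim as a moment inequality for Gamma distributions. Using the identity $\Gamma(s+a)/\Gamma(s)=\IE[U^a]$ for $U\sim\Gamma(s,1)$ and normalizing $\tilde U=U/y\sim\Gamma(y,y)$ (so $\IE[\tilde U]=1$) and similarly $\tilde V=V/x\sim\Gamma(x,x)$ with $V\sim\Gamma(x,1)$, the proposition becomes equivalent to
\[
F(x,y):=x\log\IE[\tilde U^{y/x}]+y\log\IE[\tilde V^{x/y}]\ge 0,
\]
with equality iff $x=y$. Equality on the diagonal is immediate, since both expectations then equal $\IE[\tilde U]=1$ and both exponents are $1$.

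For the inequality itself I would exploit the symmetry $F(x,y)=F(y,x)$ and assume without loss of generality $y\ge x$. Then $y/x\ge 1\ge x/y$, so Jensen's inequality yields $\IE[\tilde U^{y/x}]\ge 1$ but $\IE[\tilde V^{x/y}]\le 1$; the two logarithms have opposite signs, and the proposition expresses a precise quantitative balance between them. Since $F$ is symmetric and vanishes on the diagonal, the cleanest route is to show that $F$ is Schur-convex on $(0,\infty)^2$, so that its minimum on each line $x+y=\mathrm{const}$ is attained at the midpoint $x=y$. Via the Schur-Ostrowski criterion this reduces to verifying the sign condition $(x-y)(\partial_x F-\partial_y F)\ge 0$. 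Differentiating, one translates this into a digamma-function inequality which I would prove using the series representation $\psi(t)=-\gamma+\sum_{k\ge 0}[1/(k+1)-1/(k+t)]$ together with the sharp bound $\psi'(t)>1/t$ for $t>0$ (which follows from comparing the series $\sum_{k\ge 0}1/(t+k)^2$ with the integral $\int_0^\infty dx/(t+x)^2=1/t$).

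The main obstacle is that naive convexity bounds (Jensen, Lyapunov, log-convexity of $\log\Gamma$) are each individually insufficient: the two opposite-sign contributions cancel to leading order but leave only a positive quadratic remainder, which a Taylor expansion around the diagonal identifies as $F(x,y)\sim (y-x)^2/(2xy)$ for $y$ close to $x$. The proof therefore needs a sharper structural feature of $\log\Gamma$ than mere convexity, and the Schur-Ostrowski route has the advantage of localizing the problem to a one-parameter family of digamma inequalities. The equality case $x=y$ propagates back through the reductions because Jensen's inequality is strict whenever the underlying normalized Gamma variable is non-degenerate, i.e.\ whenever the exponent $y/x$ differs from $1$.
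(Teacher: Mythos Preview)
Your moment reformulation is correct and $F(x,x)=0$ is immediate. The gap is at the heart of the argument: you propose to establish Schur-convexity via the Schur--Ostrowski criterion, but you neither state nor prove the resulting inequality. When one actually computes $\partial_x F-\partial_y F$, the expression involves $\log\Gamma$ at two arguments, the digamma $\psi$ at four distinct points ($x$, $y$, $x+x/y$, $y+y/x$) with coefficients that are rational in $x,y$, and further elementary terms. It is not clear that the two tools you name---the series for $\psi$ and the bound $\psi'(t)>1/t$---suffice: those yield convexity and monotonicity information about $\psi$ alone, whereas here one must simultaneously control $\log\Gamma$ contributions that do not cancel. Until that inequality is written out and verified, what you have is an outline whose feasibility is uncertain, not a proof.

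By contrast, the paper's argument avoids special-function analysis entirely. After substituting $x=1/q$, $y=1/p$, it casts the inequality as a comparison of two constrained maximum-entropy problems on $[0,\infty)$: maximize the differential entropy of a density $v$ subject to $\int_0^\infty r^q v(r)\,\mathrm{d}r\le\beta$, respectively $\int_0^\infty r^p v(r)\,\mathrm{d}r\le 1$. The maximizers $v_1^*,v_2^*$ are explicit generalized exponential densities. Choosing $\beta=\beta_{p,q}$ so that $v_1^*$ is exactly feasible for the second problem gives $\Ent(v_1^*)\le\Ent(v_2^*)$. If the claimed Gamma inequality failed, $v_2^*$ would in turn be feasible for the first problem, forcing $\Ent(v_1^*)=\Ent(v_2^*)$ and hence $v_1^*=v_2^*$ by strict concavity of entropy---possible only when $p=q$. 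This is short, conceptual, and delivers the equality characterization automatically; even if your Schur-convexity route can be completed, it is this entropy argument you should compare against.
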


With $x=\frac{k}{q}$ and $y=\frac{k}{2}$, Proposition~\ref{pro:gamma} implies $\beta_{k,q}< m_q$, which is claimed in Theorem~\ref{thm:small-ball}.

The structure of the remaining text is as follows. In Section~\ref{sec:zonoid} we provide some background on $L_q$-zonoids and in particular establish continuity of the map taking measures to the generated $L_q$-zonoids. Section~\ref{sec:proj} contains the proof of Theorem~\ref{thm:proj}. In Section~\ref{sec:sec} we use duality between sections and projections to deduce Theorem~\ref{thm:section}. Finally, in Section~\ref{sec:small-ball} we prove Theorem~\ref{thm:small-ball} and Proposition~\ref{pro:gamma}.

\section{$L_q$-zonoids}\label{sec:zonoid}

In the following, we collect facts about $L_q$-zonoids and refer to \cite[Sec. 10.14]{Sch14} and \cite[Sec.~10.4.2]{AGM15} for more information and references. In particular, we refer to \cite{Kol05} for an approach through Fourier analysis, to \cite{LYZ04} for volume estimates of $L_q$-zonoids and their polars and to \cite{Mol09} for a probabilistic interpretation of $L_q$-zonoids. If $q=1$, then $L_1$-zonoids are simply known as zonoids, see also \cite[Ch.~3.5]{Sch14}, and for $q=\infty$ one can define $Z_{\infty}(\mu)$ as the convex hull of the support of $\mu$. 

Let $1\le q<\infty$. Recall that the set of all $L_q$-zonoids is denoted by
\[
\Zon_{k,q}
=\{Z_q(\mu)\colon \mu\in\cP_q(\IR^k)\}
\subset \cK^k,
\]
and for $\mu\in \cP_q(\IR^k)$ the convex body $Z_q(\mu)$ has support function 
\begin{equation} \label{eq:tq}
T_q(\mu)\colon \IR^k\to \IR,\quad u\mapsto \Big(\int_{\IR^k}|\langle x,u\rangle|^q\dd\mu(x)\Big)^{1/q},
\end{equation}
whose $q$-th power is also called the $L_q$-cosine transform of $\mu$, see e.g.~\cite{Lon03}. It is well-defined since $T_q(\mu)(u)\le \big( \int_{\IR^k}\|x\|^q\dd\mu(x)\big)^{1/q}\|u\| <\infty$ for $u\in \IR^k$. In fact, allowing only measures in $\cP_q(\IR^k)$ is no restriction, which we show for completeness. 

\begin{lem}
Let $1\le q<\infty$ and $\mu\in \cP(\IR^k)$. If $\int_{\IR^k}|\langle x,u\rangle|^q\dd\mu(x)<\infty$ holds for every $u\in \IR^k$, then $\mu\in \cP_q(\IR^k)$.
\end{lem}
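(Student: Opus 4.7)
The plan is to reduce the bound on $\|x\|_2^q$ to bounds on the coordinate functions $|x_i|^q = |\langle x, e_i\rangle|^q$, which are finite by hypothesis, and then invoke equivalence of finite-dimensional norms.

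First I would apply the hypothesis with $u = e_1, \ldots, e_k$, the standard basis vectors of $\IR^k$. This immediately gives
\[
\int_{\IR^k} |x_i|^q \dd\mu(x) < \infty \qquad \text{for } i=1,\dots,k.
\]
The goal is to deduce $\int_{\IR^k} \|x\|_2^q \dd\mu(x) < \infty$, so I need a pointwise inequality of the form $\|x\|_2^q \le C_{k,q}\sum_{i=1}^k |x_i|^q$ for all $x \in \IR^k$.

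For the inequality itself, I would split according to whether $q \ge 2$ or $1 \le q < 2$. In the former case, the power mean inequality gives
\[
\|x\|_2^q = \Big(\sum_{i=1}^k x_i^2\Big)^{q/2} \le k^{q/2-1}\sum_{i=1}^k |x_i|^q.
\]
In the latter case, the embedding $\ell_q^k \hookrightarrow \ell_2^k$ yields $\|x\|_2 \le \|x\|_q$, whence $\|x\|_2^q \le \sum_{i=1}^k |x_i|^q$ directly. Either way, integrating against $\mu$ and using linearity together with the coordinatewise bounds above gives $\int \|x\|_2^q \dd\mu(x) < \infty$, i.e.\ $\mu \in \cP_q(\IR^k)$.

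There is no real obstacle: the argument is two lines once one notices that the condition on $T_q(\mu)(u)$ at the basis vectors already captures all the moment information needed, thanks to the equivalence of norms in finite dimension.
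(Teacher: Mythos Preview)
Your argument is correct and, in fact, more elementary than the paper's. The paper takes a slightly different, more geometric route: instead of the standard basis it fixes a maximal set $\cE\subset\Sp^{k-1}$ with pairwise inner products at most $\tfrac12$ (a spherical code). Maximality forces every unit vector $x/\|x\|$ to satisfy $\langle x/\|x\|,v\rangle>\tfrac12$ for some $v\in\cE$, so $\|x\|^q<2^q\sup_{v\in\cE}|\langle x,v\rangle|^q\le 2^q\sum_{v\in\cE}|\langle x,v\rangle|^q$, and integrating the finite sum finishes the proof. The two approaches differ only in the choice of the finite set of directions and the pointwise inequality used: you exploit the explicit comparison of $\ell_2^k$ and $\ell_q^k$ norms (hence the case split at $q=2$), while the paper's covering argument gives a uniform constant $2^q$ independent of the relation between $q$ and~$2$. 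Either way the content is the same---finitely many linear functionals control $\|x\|^q$---and your version is the shorter one.
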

\begin{proof}
	Take a maximal set $\cE\subset \Sp^{k-1}$ such that $\langle u,v\rangle\le \frac{1}{2}$ for every $u,v\in \cE$ with $u\neq v$, i.e. a spherical code of minimal angle $\arccos(\frac{1}{2})$, see \cite[Ch.~1, 2.3]{CS99}. For any $x\in \IR^k\setminus\{0\}$ we find $v\in \cE$ such that $\langle\frac{x}{\|x\|},v\rangle> \frac{1}{2}$. Consequently, it holds that
\[
\frac{1}{2^q}\int_{\IR^k}\|x\|^q \dd\mu(x)
< \int_{\IR^k}\sup_{v\in \cE}|\langle x,v\rangle|^q \dd\mu(x)
\le \sum_{v\in \cE}\int_{\IR^k}|\langle x,v\rangle|^q \dd\mu(x)
<\infty,
\]
which gives $\mu\in \cP_q(\IR^k)$.
\end{proof}

In the following, we equip $\Zon_{k,q}$ with the induced Hausdorff metric and $\cP_q(\IR^k)$ with the $q$-Wasserstein metric, for which $\mu_n\to \mu$ in $\cP_q(\IR^k)$ if and only if $\mu_n\to \mu$ weakly and $\int_{\IR^k}\|x\|_2^q \dd\mu_n(x)\to\int_{\IR^k}\|x\|_2^q \dd\mu(x)$.

\begin{prop}\label{pro:continuity}
Let $1\le q<\infty$. The map
\[
Z_q\colon \cP_q(\IR^k)\to \Zon_{k,q},\qquad \mu \mapsto Z_q(\mu),
\]
is well-defined and continuous.
\end{prop}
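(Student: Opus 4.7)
The plan is to split the proposition into two parts, well-definedness and continuity, both of which reduce to standard facts about support functions combined with properties of $q$-Wasserstein convergence. For well-definedness, I would verify that $T_q(\mu)$ defined in \eqref{eq:tq} is a support function. Finiteness is already observed in the paragraph preceding the statement. Positive homogeneity $T_q(\mu)(tu) = t\,T_q(\mu)(u)$ for $t\ge 0$ is immediate. Subadditivity $T_q(\mu)(u+v) \le T_q(\mu)(u) + T_q(\mu)(v)$ is Minkowski's inequality in $L^q(\mu)$ applied to $x\mapsto \langle x,u\rangle$ and $x\mapsto \langle x,v\rangle$. A finite sublinear function on $\IR^k$ is the support function of a unique element of $\cK^k$ (\cite[Thm.~1.7.1]{Sch14}), and this element is by definition $Z_q(\mu)$.

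For continuity, I would use that the Hausdorff metric on $\cK^k$ coincides with the uniform metric on support functions restricted to $\Sp^{k-1}$, i.e. $\delta_H(K,L)=\sup_{u\in\Sp^{k-1}}|h(K,u)-h(L,u)|$. Thus it suffices to show that $\mu_n\to\mu$ in $\cP_q(\IR^k)$ implies $T_q(\mu_n)\to T_q(\mu)$ uniformly on $\Sp^{k-1}$. I would establish this via an Arzelà--Ascoli argument in two steps.

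\emph{Pointwise convergence.} For fixed $u\in\Sp^{k-1}$, the function $x\mapsto |\langle x,u\rangle|^q$ is continuous and dominated by $\|x\|_2^q$. Since $\mu_n\to\mu$ in $\cP_q(\IR^k)$ means weak convergence together with $\int \|x\|_2^q \,\dd\mu_n\to\int\|x\|_2^q\,\dd\mu$, a standard uniform integrability argument (or the equivalent characterization of $q$-Wasserstein convergence through integrals of continuous functions of at most $q$-growth) yields $\int|\langle x,u\rangle|^q\,\dd\mu_n\to\int|\langle x,u\rangle|^q\,\dd\mu$, hence $T_q(\mu_n)(u)\to T_q(\mu)(u)$. \emph{Uniform equicontinuity.} The support function $T_q(\mu_n)$ is Lipschitz on $\IR^k$ with constant $R_n:=\sup_{u\in\Sp^{k-1}} T_q(\mu_n)(u)\le \bigl(\int\|x\|_2^q\,\dd\mu_n\bigr)^{1/q}$. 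Wasserstein convergence implies $R_n$ is bounded in $n$, so the family $\{T_q(\mu_n)\}$ is uniformly Lipschitz on the compact set $\Sp^{k-1}$. Pointwise convergence of a uniformly equicontinuous family on a compact space upgrades to uniform convergence, giving $\delta_H(Z_q(\mu_n),Z_q(\mu))\to 0$.

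The main conceptual point is the choice to work with $q$-Wasserstein convergence rather than mere weak convergence: the latter is insufficient, since convergence of $\int|\langle x,u\rangle|^q\,\dd\mu_n$ genuinely requires control of the $q$-th moments. Once the correct topology on $\cP_q(\IR^k)$ is used, the argument is essentially routine, and no delicate estimate beyond Minkowski's inequality and the Lipschitz bound for support functions of bounded convex bodies is needed.
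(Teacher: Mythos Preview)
Your proposal is correct and follows essentially the same route as the paper: well-definedness via sublinearity of $T_q(\mu)$ (the paper packages this and the Lipschitz bound into a separate lemma, using the reverse triangle inequality in $L_q(\mu)$), and continuity via pointwise convergence from $q$-Wasserstein convergence together with a uniform Lipschitz bound and Arzel\`a--Ascoli to upgrade to uniform convergence of support functions on $\Sp^{k-1}$. The only cosmetic difference is that the paper frames the argument through the isometry $I\colon \cK^k\to C_s(\Sp^{k-1})$ and cites \cite{Vil09} for the pointwise limit, whereas you invoke the characterization of $q$-Wasserstein convergence via test functions of at most $q$-growth directly.
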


For the proof of Proposition~\ref{pro:continuity} we need the following lemma, where we recall that a function $f\colon \IR^k\to\IR$ is sublinear if it is positively homogenous and subadditive.

\begin{lem}\label{lem:continuity}
Let $1\le q<\infty$. For every $\mu\in \cP_q(\IR^k)$ the function $T_q(\mu)\colon \IR^k\to \IR$ defined in \eqref{eq:tq} is Lipschitz continuous and sublinear.
\end{lem}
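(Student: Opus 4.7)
The plan is to check the three properties in the order: positive homogeneity, subadditivity, and then Lipschitz continuity (with the latter being a direct consequence of sublinearity combined with the $q$-th moment bound). Finiteness of $T_q(\mu)(u)$ for every $u\in\IR^k$ has already been noted in the surrounding discussion, using $|\langle x,u\rangle|\le \|x\|_2\|u\|_2$ and the assumption $\mu\in\cP_q(\IR^k)$.

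First, positive homogeneity is immediate: for $\lambda\ge 0$ and $u\in\IR^k$, pulling the factor $\lambda$ out of the integrand yields $T_q(\mu)(\lambda u)=\lambda T_q(\mu)(u)$. For subadditivity, view the linear functional $f_u(x):=\langle x,u\rangle$ as an element of $L^q(\IR^k,\mu)$; then $f_{u+v}=f_u+f_v$ pointwise, so Minkowski's inequality in $L^q(\mu)$ gives
\[
T_q(\mu)(u+v)=\|f_u+f_v\|_{L^q(\mu)}\le \|f_u\|_{L^q(\mu)}+\|f_v\|_{L^q(\mu)}=T_q(\mu)(u)+T_q(\mu)(v).
\]
This establishes that $T_q(\mu)$ is sublinear.

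For Lipschitz continuity, a standard consequence of sublinearity is the reverse triangle inequality $|T_q(\mu)(u)-T_q(\mu)(v)|\le T_q(\mu)(u-v)$, obtained by applying subadditivity to $u=(u-v)+v$ and to $v=(v-u)+u$. Then Cauchy--Schwarz gives $|\langle x,u-v\rangle|\le \|x\|_2\,\|u-v\|_2$ pointwise, hence
\[
T_q(\mu)(u-v)\le \Big(\int_{\IR^k}\|x\|_2^q\,\dd\mu(x)\Big)^{1/q}\|u-v\|_2.
\]
Since $\mu\in\cP_q(\IR^k)$, the constant on the right is finite, so $T_q(\mu)$ is Lipschitz with this constant.

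There is no serious obstacle here; the only point worth stressing is that the hypothesis $\mu\in\cP_q(\IR^k)$ is precisely what turns the Cauchy--Schwarz bound into a uniform Lipschitz estimate. The same computation will be reused in the proof of Proposition~\ref{pro:continuity}, where one compares $T_q(\mu_n)$ and $T_q(\mu)$ via the $q$-Wasserstein convergence.
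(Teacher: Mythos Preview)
Your proof is correct and essentially identical to the paper's: both establish positive homogeneity directly, obtain subadditivity (and hence the reverse triangle inequality) from Minkowski's inequality in $L^q(\mu)$, and then bound $T_q(\mu)(u-v)$ by $\big(\int_{\IR^k}\|x\|_2^q\,\dd\mu(x)\big)^{1/q}\|u-v\|_2$ using $|\langle x,u-v\rangle|\le\|x\|_2\|u-v\|_2$. The only difference is expository---you spell out Minkowski and the derivation of the reverse triangle inequality separately, whereas the paper combines them in one line.
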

\begin{proof}
Let $\mu\in \cP_q(\IR^k)$. Positive homogenity of $T_q(\mu)$ is immediate. By the reverse triangle inequality in $L_q(\mu)$, for $u_1,u_2\in \IR^k$,
\[
|T_q(\mu)(u_1)-T_q(\mu)(u_2)|
\le T_q(\mu)(u_1-u_2)
\le \Big(\int_{\IR^k}\|x\|^q\dd\mu(x)\Big)^{1/q}\|u_1-u_2\|,
\]
which establishes subadditivity and Lipschitz continuity of $T_q(\mu)$.  
\end{proof}

\begin{proof}[Proof of Proposition~\ref{pro:continuity}]
	Denote by $C(\Sp^{k-1})$ the space of continuous functions on $\Sp^{k-1}$ equipped with the uniform metric and by $C_s(\Sp^{k-1})$ the set of $f\in C(\Sp^{k-1})$ for which the extension $\tilde{f}(x)=\|x\| f(\frac{x}{\|x\|})$, $x\in \IR^k\setminus\{0\}$, $\tilde{f}(0)=0$, is sublinear.	By Lemma~\ref{lem:continuity}, for every $\mu\in \cP_q(\IR^k)$ the restriction $T_q(\mu)|_{\Sp^{k-1}}$ belongs to $C_s(\Sp^{k-1})$.  Thus, the map $\tilde{T}_q\colon \cP_q(\IR^k)\to C_s(\Sp^{k-1})$ taking $\mu$ to $T_q(\mu)|_{\Sp^{k-1}}$ is well-defined. 

We can then write $Z_q=I^{-1}\circ \tilde{T}_q$, where
\begin{align*}
I\colon \cK^k \to C_s(\Sp^{k-1}),\qquad K \mapsto h(K,\cdot)|_{\Sp^{k-1}},
\end{align*}
is an isometric bijection, see e.g. \cite[Ch.~1]{Sch14}. Therefore, the composition $Z_q=I^{-1}\circ \tilde{T}_q$ is well-defined and it only remains to prove continuity of $\tilde{T}_q\colon \cP_q(\IR^k)\to C_s(\Sp^{k-1})$. 

Let $\mu_0,\mu_1,\dots$ with $\mu_n\to \mu_0$ in $\cP_q(\IR^k)$, i.e.~$\mu_n\to\mu_0$ weakly and moreover the convergence $\int_{\IR^k}\|x\|^q\dd\mu_n(x)\to\int_{\IR^k}\|x\|^q\dd\mu_0(x)$ holds. Moreover, for every $u\in\Sp^{k-1}$, the pointwise convergence $\tilde{T}_q(\mu_n)(u)\to \tilde{T}_q(\mu_0)(u)$ holds, see e.g.~\cite[Ch.~6.8]{Vil09}. By Lemma~\ref{lem:continuity}, the functions $\tilde{T}_q(\mu_0),\tilde{T}_q(\mu_1),\dots$ are Lipschitz-continuous on $\Sp^{k-1}$ (with respect to the Euclidean metric) with Lipschitz-constant at most $M^{1/q}$ where $M:=\sup_{n\in\IN_0}\int_{\IR^k}\|x\|^q\dd\mu_n(x)$ is finite. By the Arzel\`a-Ascoli theorem, the sequence $(\tilde{T}_q(\mu_n))_{n\in\IN}$ is relatively compact in $C(\Sp^{k-1})$ and uniform convergence $\tilde{T}_q(\mu_n)\to \tilde{T}_q(\mu_0)$ follows from pointwise convergence. This proves Proposition~\ref{pro:continuity}.
\end{proof}

We note the following consequence of Lemma~\ref{lem:continuity}. Recall that for our purposes a convex body may have empty interior.

\begin{prop}\label{pro:interior}
For a measure $\mu\in \cP_q(\IR^k)$ the following statements are equivalent:
\begin{enumerate}
	\item[(i)] $\mu$ is not supported on a hyperplane,
	\item[(ii)] $Z_q(\mu)$ contains the origin in its interior,
	\item[(iii)] $Z_q(\mu)$ is a symmetric convex body with nonempty interior.
\end{enumerate}
\end{prop}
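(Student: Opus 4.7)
The plan is to establish the three equivalences by going around the cycle, using the support-function representation from \eqref{eq:q-zonoid} and the continuity/sublinearity properties of $T_q(\mu)$ from Lemma~\ref{lem:continuity}. The key observation, used throughout, is that
\[
T_q(\mu)(u)
=\Big(\int_{\IR^k}|\langle x,u\rangle|^q\dd\mu(x)\Big)^{1/q}
=0 \quad\Longleftrightarrow\quad \mu(\{x\in\IR^k\colon\langle x,u\rangle\neq 0\})=0,
\]
i.e.\ $T_q(\mu)(u)=0$ precisely when $\mu$ is concentrated on the hyperplane $u^\perp$ (for $u\neq 0$).

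First I would dispose of the equivalence (ii)$\Leftrightarrow$(iii). From the definition \eqref{eq:q-zonoid} one sees that $T_q(\mu)(-u)=T_q(\mu)(u)$ for all $u\in\IR^k$, so the support function of $Z_q(\mu)$ is even; equivalently, $Z_q(\mu)=-Z_q(\mu)$, i.e.\ $Z_q(\mu)$ is symmetric about the origin. A symmetric convex body has nonempty interior if and only if it contains the origin in its interior, which gives (ii)$\Leftrightarrow$(iii).

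Next, for (i)$\Rightarrow$(ii), assume $\mu$ is not supported on any hyperplane. Then for every $u\in \Sp^{k-1}$ the hyperplane $u^\perp$ fails to carry the whole mass of $\mu$, hence $T_q(\mu)(u)>0$. By Lemma~\ref{lem:continuity}, $T_q(\mu)$ is continuous, so on the compact sphere $\Sp^{k-1}$ it attains a strictly positive minimum $\varepsilon>0$. Thus $h(Z_q(\mu),u)\ge\varepsilon$ for every $u\in\Sp^{k-1}$, which is exactly the statement $\varepsilon\IB_2^k\subset Z_q(\mu)$, so the origin lies in $\Int Z_q(\mu)$.

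For the remaining direction I would argue the contrapositive (not (i))$\Rightarrow$(not (ii)): if $\mu$ is supported on some hyperplane, by translating the normal we may take it of the form $u_0^\perp$ with $u_0\in\Sp^{k-1}$, so that $T_q(\mu)(\pm u_0)=0$. The inequalities $h(Z_q(\mu),u_0)\le 0$ and $h(Z_q(\mu),-u_0)\le 0$ force $\langle x,u_0\rangle=0$ for every $x\in Z_q(\mu)$, so $Z_q(\mu)\subset u_0^\perp$, a proper affine subspace that has empty interior in $\IR^k$; in particular the origin is not an interior point. The main (mild) obstacle is making sure the hyperplane in (i) is taken through the origin in the reverse direction; this is automatic since $T_q(\mu)(u)=0$ detects only hyperplanes of the form $u^\perp$, but it is consistent with the statement because the symmetry of $Z_q(\mu)$ means a supporting hyperplane at $0$ can be chosen through the origin without loss of generality, and a measure not supported on any hyperplane through the origin is automatically not supported on any affine hyperplane avoiding the origin by a simple projection argument.
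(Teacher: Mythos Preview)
Your argument is correct and follows essentially the same route as the paper: you use the symmetry of $T_q(\mu)$ for (ii)$\Leftrightarrow$(iii), and for (i)$\Leftrightarrow$(ii) you combine the observation $T_q(\mu)(u)=0\iff\mu(u^\perp)=1$ with the continuity of $T_q(\mu)$ on the compact sphere (the paper phrases the direction (not ii)$\Rightarrow$(not i) via the attained infimum, you phrase (i)$\Rightarrow$(ii) via the attained positive minimum, which is the same compactness argument).

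One small correction: your closing remark about affine hyperplanes is both unnecessary and false. In the paper ``hyperplane'' means a linear hyperplane $u^\perp$, so no reconciliation is needed; and the claim that a measure not supported on any linear hyperplane cannot be supported on an affine hyperplane off the origin fails---e.g.\ the uniform measure on the segment from $(1,-1)$ to $(1,1)$ in $\IR^2$ is supported on $\{x_1=1\}$ but on no line through the origin (and indeed its $L_q$-zonoid has $0$ in its interior). Simply drop that sentence.
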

\begin{proof}
(i) $\Leftrightarrow $ (ii)$\colon $  Suppose that $\mu\in \cP_q(\IR^k)$ is supported on a hyperplane $H_u=\{x\in\IR^k\colon \langle x,u\rangle = 0\}$. By definition, the support function satisfies then $h(Z_q(\mu),\pm u)=0$, implying that $Z_q(\mu)\subset H_u$. Consequently, the interior of $Z_q(\mu)$ is empty.

If, on the other hand, $Z_q(\mu)$ does not contain the origin in its interior, then for arbitrary $\varepsilon>0$ there is $u\in \Sp^{k-1}$ with $h(Z_q(\mu),u)<\varepsilon$. This gives $\inf_{u\in\Sp^{k-1}}h(Z_q(\mu),u)=0$. By Lemma~\ref{lem:continuity}, the function $h(Z_q(\mu),\cdot)$ is continuous and attains its infimum at some point $u_0\in \Sp^{k-1}$. Then $\mu$ is supported in $H_{u_0}$. This completes the proof of the first equivalence.

(ii) $\Leftrightarrow $ (iii)$\colon $ By Lemma~\ref{lem:continuity}, the function $T_q(\mu)$ is sublinear and thus the support function of a convex body given by $Z_q(\mu)$, which is symmetric. By symmetry, the $L_q$-zonoid $Z_q(\mu)$ has nonempty interior if and only if $0\in \Int Z_q(\mu)$.
\end{proof}

The set $\Zon_{k,q}$ of $L_q$-zonoids arises from the closure of the set of $L_q$-zonotopes, where $K\in \cK^k$ is called an $L_q$-zonotope if its support function is of the form
\begin{equation} \label{eq:q-zonotope}
h(K,u)=\Big(\sum_{i=1}^{n}|\langle a_i,u\rangle|^q\Big)^{1/q},\qquad u\in \IR^k,
\end{equation}
for some $a_1,\dots,a_n\in \IR^k$. This is a special case of \eqref{eq:q-zonoid} with discrete generating measure $\mu$. We can also interpret $K$ as the $q$-sum of the line segments $[-a_i,a_i]$, $i=1,\dots,n$, where the $q$-sum of convex bodies $K,L$ containing the origin is the unique convex body with support function $u\mapsto (h(K,u)^q+h(L,u)^q)^{1/q}$, $u\in \IR^k$, see \cite{Fir62} and \cite[Ch.~9.1]{Sch14}. We observe that $L_q$-zonotopes correspond to linear images of $\IB_p^n$.

\begin{lem}\label{lem:zonotope}
Let $1< p\le \infty$ and $1\le q<\infty$ with $\frac{1}{q}+\frac{1}{p}=1$ and $n,k\in \IN$. For every $A\in \IR^{k\times n}$ the image $A\IB_p^n$ is a convex body in $\IR^k$ with support function of the form \eqref{eq:q-zonotope}, where $a_1,\dots,a_n$ are the columns of $A$. In particular, it is an $L_q$-zonoid  with generating measure $\frac{1}{n}\sum_{i=1}^{n} \delta_{n^{1/q}a_i}$.
\end{lem}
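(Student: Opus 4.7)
The proof is essentially a one-line computation based on the duality of $\ell_p$-norms, so my plan is to set up the relevant identifications and verify the two claims in sequence.

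First, I would observe that $A\IB_p^n$ is nonempty, compact, and convex, as the continuous linear image of the convex body $\IB_p^n$ in $\IR^n$. Hence it is a convex body in $\IR^k$. The next step is to compute its support function. For $u\in \IR^k$, by definition,
\[
h(A\IB_p^n,u)=\sup_{x\in \IB_p^n}\langle Ax,u\rangle=\sup_{x\in \IB_p^n}\langle x,A^{\top}u\rangle.
\]
Since $\|\cdot\|_p$ and $\|\cdot\|_q$ are dual norms (for $1<p\le\infty$ and $1\le q<\infty$ with $\frac{1}{p}+\frac{1}{q}=1$), the last supremum equals $\|A^{\top}u\|_q$. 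Writing $A^{\top}u=(\langle a_1,u\rangle,\dots,\langle a_n,u\rangle)\in\IR^n$, where $a_1,\dots,a_n\in\IR^k$ are the columns of $A$, gives
\[
h(A\IB_p^n,u)=\Big(\sum_{i=1}^{n}|\langle a_i,u\rangle|^q\Big)^{1/q},
\]
which is precisely \eqref{eq:q-zonotope}. The case $p=\infty$, $q=1$ is handled identically, using $\sup_{x\in[-1,1]^n}\langle x,y\rangle=\|y\|_1$.

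Finally, to identify the generating measure, I would take $\mu=\frac{1}{n}\sum_{i=1}^{n}\delta_{n^{1/q}a_i}\in\cP_q(\IR^k)$ (which has finite $q$-th moment as a discrete measure with finite support) and directly compute
\[
T_q(\mu)(u)^q=\int_{\IR^k}|\langle x,u\rangle|^q\,\dd\mu(x)=\frac{1}{n}\sum_{i=1}^{n}\big|n^{1/q}\langle a_i,u\rangle\big|^q=\sum_{i=1}^{n}|\langle a_i,u\rangle|^q=h(A\IB_p^n,u)^q.
\]
Since a convex body is determined by its support function, this yields $Z_q(\mu)=A\IB_p^n$, showing that $A\IB_p^n\in\Zon_{k,q}$ with the claimed generating measure.

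There is no real obstacle here; the only point that requires a little care is matching the normalization $n^{1/q}$ so that the factor $\frac{1}{n}$ from the empirical measure cancels against $(n^{1/q})^q=n$, and separately checking the limiting case $p=\infty$.
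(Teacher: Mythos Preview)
Your proof is correct and follows essentially the same route as the paper: compute $h(A\IB_p^n,u)=\sup_{x\in\IB_p^n}\langle x,A^{\top}u\rangle=\|A^{\top}u\|_q$ via $\ell_p$--$\ell_q$ duality, then match this against the definition of $Z_q(\mu)$ for $\mu=\frac{1}{n}\sum_{i=1}^n\delta_{n^{1/q}a_i}$. Your version is slightly more explicit about why $A\IB_p^n$ is a convex body and about the $p=\infty$ endpoint, but there is no substantive difference.
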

\begin{proof}
The support function of $A\IB_p^n$ at $u\in\IR^k$ is given by	
\[
h(A\IB_p^n,u)
=\sup_{x\in \IB_{p}^n}\langle x,A^{\top} u\rangle
=\|A^{\top} u\|_q.
\]
Denoting the columns of $A$ by $a_1,\dots,a_n$, it follows for $ \mu_A =\frac{1}{n}\sum_{i=1}^{n}\delta_{n^{1/q}a_i} $ that
\[
h(A\IB_p^n,u)
=\Big(\sum_{i=1}^{n}|\langle a_i,u\rangle|^q\Big)^{1/q}
=\Big(\int_{\IR^k}|\langle x,u\rangle |^q {\rm d} \mu_A(x)\Big)^{1/q}.
\]
\end{proof}

\begin{rem}
Lemma~\ref{lem:zonotope} admits a partial converse. Let $1\le p\le \infty$ with $p\neq 2$ and $1\le q<\infty$. If for some $A\in \IR^{k\times n}$ the set $A\IB_p^n$ is not contained in a two-dimensional subspace, then it is an $L_q$-zonoid if and only if $p^*=q$ or $1\le q\le  p^* \le 2$, where $\frac{1}{p}+\frac{1}{p^*}=1$. In particular, for $p<2$, the set $A\IB_p^n$ is an $L_q$-zonoid if and only if $p^*=q$. This follows from the embedding results in \cite[Thm.~2.1]{Dor76} together with \cite[Lem.~6.4]{Kol05} and Remark~\ref{rem:q-projection}. 
\end{rem}

\section{Large deviations in the Stiefel manifold and the proof of Theorem~\ref{thm:proj}}\label{sec:proj}

In the following, we derive Theorem~\ref{thm:proj} from an LDP for the empirical measure of rescaled rows of the random Stiefel matrix $\Pi_{n,k}$. The following is a special case of Lemma~\ref{lem:zonotope}.

\begin{prop}\label{pro:proj-zonoid}
	Let $1\le p\le \infty$ and $n\ge k$. Every realization of $ Z_{n,p}=n^{1/p-1/2}\Pi_{n,k}^{\top}\mathbb{B}_p^n $ is an $L_q$-zonoid with generating measure 
\begin{equation} \label{eq:empirical}
L_n =\frac{1}{n}\sum_{i=1}^{n}\delta_{\sqrt{n}v_i},
\end{equation}
where $v_1,\dots,v_n$ are the rows of $\Pi_{n,k}$. 
\end{prop}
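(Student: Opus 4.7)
The plan is to recognize the proposition as a direct specialization of Lemma~\ref{lem:zonotope}. Set $A := n^{1/p-1/2}\Pi_{n,k}^{\top}\in \IR^{k\times n}$, so that by definition $Z_{n,p} = A\IB_p^n$. Since the columns of $\Pi_{n,k}^\top$ are precisely the rows $v_1,\dots,v_n$ of $\Pi_{n,k}$, the columns of $A$ are $a_i = n^{1/p-1/2}v_i$, $i=1,\dots,n$. For $1<p\le\infty$, Lemma~\ref{lem:zonotope} immediately applies and furnishes a generating measure of the form $\frac{1}{n}\sum_{i=1}^n\delta_{n^{1/q}a_i}$.

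To finish for $1<p\le\infty$ I would just collapse the exponent: the atoms sit at $n^{1/q}\cdot n^{1/p-1/2}v_i = n^{1/q+1/p-1/2}v_i$, and the Hölder relation $\frac{1}{p}+\frac{1}{q}=1$ forces the exponent to equal $\frac{1}{2}$, so the generating measure is exactly $L_n = \frac{1}{n}\sum_{i=1}^n\delta_{\sqrt{n}v_i}$. Nothing else is required in this range.

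The only point needing a separate remark is the boundary case $p=1$ (hence $q=\infty$), which is outside the scope of Lemma~\ref{lem:zonotope}. Here $\IB_1^n = \mathrm{conv}\{\pm e_i\}_{i=1}^n$ is sent by $n^{1/2}\Pi_{n,k}^\top$ to $\mathrm{conv}\{\pm \sqrt{n}v_i\}_{i=1}^n$, and the support function evaluates to $u\mapsto \sqrt{n}\max_i|\langle v_i,u\rangle|$; this matches the $q\to\infty$ limit of the expression $(\int|\langle x,u\rangle|^q\,\dd L_n(x))^{1/q}$, with $L_n$ interpreted via its symmetrization (the natural reading needed to keep \eqref{eq:q-zonoid} symmetric in $u$ at $q=\infty$).

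There is no real obstacle here; the only bookkeeping issue is verifying the exponent $\frac{1}{p}+\frac{1}{q}-\frac{1}{2}=\frac{1}{2}$, and the only mild subtlety is the convention for $Z_\infty(\mu)$, which must be taken as the symmetric convex hull of $\operatorname{supp}\mu$ if one wishes the $p=1$ case to match the formula uniformly.
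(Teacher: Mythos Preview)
Your proof is correct and follows the same approach as the paper, which simply states that the proposition is a special case of Lemma~\ref{lem:zonotope}. You additionally flag the boundary case $p=1$ (outside the hypothesis $1<p\le\infty$ of Lemma~\ref{lem:zonotope}) and correctly observe that, under the paper's convention $Z_\infty(\mu)=\mathrm{conv}(\mathrm{supp}\,\mu)$, one must pass to the symmetrization of $L_n$ to recover $\mathrm{conv}\{\pm\sqrt{n}v_i\}$; the paper does not address this point (and the case $p=1$ plays no role in the main results).
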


\begin{rem}\label{rem:q-sum}
Proposition~\ref{pro:proj-zonoid} can be interpreted as $Z_{n,p}$ being the $q$-sum of the random line segments $[-\sqrt{n}v_i,\sqrt{n}v_i]$, $i=1,\dots,n$. Since these segments are identically distributed, one approach to an LDP for $(Z_{n,p})_{n\ge k}$ may consist in extending existing LDPs for Minkowski sums to $q$-sums. However, the dependence between the rows $v_1,\dots,v_n$ complicates such an approach. 
\end{rem}

Kim and Ramanan established in \cite{KR23} that for every $q\in (0,2)$ the sequence $(L_n)_{n\ge k}$ in \eqref{eq:empirical} satisfies an LDP in the $q$-Wasserstein space $\cP_q(\IR^k)$ equipped with the $q$-Wasserstein metric. Note that an LDP directly for the sequence $(\Pi_{n,k})_{n\ge k}$ was proven in \cite{KP24}. It is well-known that $\Pi_{n,k}\overset{\rm d}{=} G_{n,k}^{\top}(G_{n,k}G_{n,k}^{\top})^{-1/2}$, where $G_{n,k}$ is a Gaussian random matrix with columns $g_1,\dots,g_n\overset{\rm i.i.d.}{\sim} \gamma^{\otimes k}$, see e.g.~\cite[Lem.~3.1]{KPT21}. By the law of large numbers, the random matrix $\frac{1}{n}G_{n,k}G_{n,k}^*$ tends almost surely to the identity matrix $I_{k\times k}$, as $n\to\infty$, and thus one may compare the behavior of $L_n$ as in \eqref{eq:empirical} to that of $\frac{1}{n}\sum_{i=1}^{n}\delta_{g_i}$. By Sanov's theorem, the latter sequence of empirical measures satisfies an LDP in $\cP(\IR^k)$, equipped with the weak topology, with rate function at $\mu\in \cP(\IR^k)$ given by the relative entropy
\[
H(\mu | \gamma^{\otimes k})
=
\begin{cases}
	\int_{\IR^k} \log(\frac{\dd \mu}{\dd \gamma^{\otimes k}})\dd\mu &  \colon   \mu\ll \gamma^{\otimes k},\\
	\infty & \colon  \text{otherwise}.
\end{cases}
\]
In contrast, the rate function corresponding to $L_n$ is given by
\begin{equation} \label{eq:stiefel-grf}
\IH_k(\mu)
=
\begin{cases}
	H(\mu | \gamma^{\otimes k})+\frac{1}{2}\tr(I_{k\times k}-\Cov(\mu)) & \colon  \Cov(\mu)\le I_{k\times k},\\
	\infty & \colon  \text{otherwise},
\end{cases}
\end{equation}
where $\tr(A)$ denotes the trace of a matrix $A\in \IR^{k\times k}$.

\begin{prop}[{\cite[Thm. 2.8]{KR23}}]\label{pro:measures}
	Let $q\in (0,2)$ and $k\in\IN$. Moreover, let $v_1,\dots,v_n$ be the rows of a random matrix $\Pi_{n,k}$ sampled according to the Haar measure on $\mathbb{V}_{n,k}$. Then, the sequence $(L_n)_{n\ge k}$ as in \eqref{eq:empirical} satisfies an LDP in $\cP_q(\IR^k)$ (equipped with the $q$-Wasserstein metric) with strictly convex good rate function $\IH_k$.
\end{prop}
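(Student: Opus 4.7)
The plan is to reduce the LDP for the rescaled Stiefel-row empirical measure $L_n$ to a Sanov-type LDP in $q$-Wasserstein topology for an i.i.d.\ Gaussian sample, and then to account for the orthonormalising matrix that turns a Gaussian matrix into a Haar-distributed Stiefel matrix. I would exploit the standard representation $\Pi_{n,k}\stackrel{\mathrm d}{=}G(G^\top G)^{-1/2}$, where $G\in\IR^{n\times k}$ has i.i.d.\ standard Gaussian entries with rows $g_1,\dots,g_n\overset{\rm i.i.d.}{\sim}\gamma^{\otimes k}$; setting $W_n:=\tfrac{1}{n}G^\top G$ and $\mu_n^G:=\tfrac{1}{n}\sum_i\delta_{g_i}$, the $i$-th row of $\Pi_{n,k}$ satisfies $\sqrt n\,v_i=W_n^{-1/2}g_i$, yielding the key identity $L_n=(W_n^{-1/2})_{*}\mu_n^G$. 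This displays $L_n$ as the pushforward of the Gaussian empirical measure by a matrix which is itself the second-moment matrix of $\mu_n^G$.

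By the Wasserstein-level Sanov theorem of Bolley--Guillin (or Wang--Wang--Wu), which applies thanks to the Gaussian exponential integrability $\IE\exp(\lambda|g|^q)<\infty$ for small $\lambda>0$---valid precisely when $q<2$---the sequence $(\mu_n^G)$ satisfies an LDP in $(\cP_q(\IR^k),W_q)$ with good rate $H(\cdot\,|\,\gamma^{\otimes k})$. The crux is that $\mu\mapsto M(\mu):=\int xx^\top\,\mathrm d\mu$ is only \emph{lower semicontinuous} on $(\cP_q(\IR^k),W_q)$ for $q<2$: a vanishing fraction of mass can escape to infinity and add finite second moment while remaining invisible in $W_q$. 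Hence $L_n$ is not a continuous function of $\mu_n^G$ alone, and the contraction principle cannot be applied to the marginal Sanov LDP.

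My strategy is therefore to prove a joint LDP for $(\mu_n^G,W_n)$ in $\cP_q(\IR^k)\times\mathrm{Sym}_k^{+}$ with rate
\[
J(\nu,M^{*})=H(\nu\,|\,\gamma^{\otimes k})+\tfrac{1}{2}\tr\bigl(M^{*}-M(\nu)\bigr)\quad\text{if }M^{*}\succeq M(\nu),
\]
and $+\infty$ otherwise. The extra term is the Cram\'er cost of escape: placing $\epsilon n$ atoms symmetrically at distance $R$ contributes $\epsilon R^{2}$ to $W_n$, affects $\mu_n^G$ only by $\epsilon R^{q}\to 0$ in $W_q$ when $\epsilon R^{2}$ is fixed and $q<2$, and costs $\tfrac{1}{2}\epsilon R^{2}$ per atom under the Gaussian log-density. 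The upper bound follows by truncating $\{|g_i|\le R\}$ vs.\ $\{|g_i|>R\}$, combining truncated Sanov with Cram\'er for the escape covariance, and sending $R\to\infty$; the lower bound superposes a tilted i.i.d.\ sample realising the bulk $\nu$ with an explicit symmetric escape configuration realising the excess second moment $M^{*}-M(\nu)$.

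Once the joint LDP is in hand, the pushforward map $F(\nu,M)=(M^{-1/2})_{*}\nu$ is continuous on $\cP_q(\IR^k)\times\{M\succ 0\}$, and the contraction principle yields an LDP for $L_n=F(\mu_n^G,W_n)$ with rate $\inf\{J(\nu,M^{*}):F(\nu,M^{*})=\mu,\,M^{*}\succ 0\}$. Parametrising by $M^{*}$ via $\nu=(M^{*1/2})_{*}\mu$, the Loewner constraint $M^{*}\succeq M(\nu)=M^{*1/2}M(\mu)M^{*1/2}$ reduces to a Loewner bound on $\mu$; expanding $H((M^{*1/2})_{*}\mu\,|\,\gamma^{\otimes k})$ via the Gaussian change-of-variables, the $M(\mu)$-dependent contributions cancel against $\tfrac{1}{2}\tr(M^{*}-M^{*1/2}M(\mu)M^{*1/2})$ up to an affine piece in $M^{*}$, so that the minimisation over $M^{*}\succ 0$ is attained at $M^{*}=I$ and produces exactly $\IH_k$ as in~\eqref{eq:stiefel-grf}. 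Strict convexity of $\IH_k$ is inherited from strict convexity of the relative entropy together with affinity of the escape term. The main obstacle is the joint LDP: the upper bound requires uniform control of escaping mass independent of the truncated-bulk shape, and the lower bound requires tilt-plus-escape constructions fine enough to densely approximate an arbitrary $(\nu,M^{*})$ in the product topology; exponential $W_q$-tightness of $L_n$ and goodness of $\IH_k$, needed both for the joint LDP and to transfer it to $L_n$, follow from standard Gaussian concentration combined with Bolley--Villani-style moment-entropy inequalities.
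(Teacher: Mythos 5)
The paper does not prove this proposition at all: it is imported verbatim from Kim and Ramanan \cite{KR23}, so there is no internal argument to compare yours against. Your outline is essentially a reconstruction of the route taken in that reference: represent $\Pi_{n,k}$ via a Gaussian matrix so that $\sqrt{n}\,v_i=W_n^{-1/2}g_i$ and $L_n$ is the pushforward of the Gaussian empirical measure by $W_n^{-1/2}$, and observe that since $\mu\mapsto\int xx^{\top}\dd\mu$ is only lower semicontinuous on $(\cP_q(\IR^k),W_q)$ for $q<2$, mass escaping to infinity contributes to $W_n$ while being invisible in $W_q$ --- this is precisely what generates the linear penalty in $\IH_k$. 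Your escape heuristic ($\epsilon n$ atoms at radius $R$ cost $\tfrac12\epsilon R^2$, perturb $W_q$ by $\epsilon R^q\to 0$) identifies the correct mechanism, and the final contraction is right: minimising $\tfrac12(\tr M^*-\log\det M^*)$ at $M^*=I_{k\times k}$ recovers $\Ent(\gamma^{\otimes k})-\Ent(\mu)$ on the constrained set.

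Two caveats. First, as a standalone proof the proposal has a hole exactly where you flag it: the joint LDP for $(\mu_n^G,W_n)$ is the entire technical content of the theorem, and ``truncate, apply Sanov to the bulk and Cram\'er to the escape, let $R\to\infty$'' is a program rather than an argument --- the upper bound needs the two contributions to decouple uniformly over the shape of the truncated bulk, and the lower bound needs the tilt-plus-escape configurations to exhaust the effective domain; neither is automatic. You also need a contraction principle tolerating that $(M^{-1/2})_*\nu$ is undefined at singular $M$ (harmless, since singular $M\succeq M(\nu)$ forces $\nu$ onto a hyperplane and $H(\nu\,|\,\gamma^{\otimes k})=\infty$, but it must be said). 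Second, your computation produces the constraint and penalty in terms of the full second-moment matrix $\int xx^{\top}\dd\mu$, not the centered covariance written in \eqref{eq:stiefel-grf}; this is not an error on your side. Since $\int xx^{\top}\dd L_n=I_{k\times k}$ identically, any $W_q$-limit must satisfy $\int xx^{\top}\dd\mu\preceq I_{k\times k}$, so the second-moment form is the correct one (the covariance form read literally would assign finite rate to a shifted Gaussian, which is impossible); the two agree for symmetric $\mu$, which is all the paper uses downstream. In short: right strategy and correct identification of the crux, but the central large deviations estimate is asserted rather than established.
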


We shall infer the LDP in Theorem~\ref{thm:proj} from Proposition~\ref{pro:measures} using continuity of the map $Z_q$ taking the random measure $L_n$ to the random convex body $Z_q(L_n)=Z_{n,p}$. 

\begin{proof}[Proof of Theorem~\ref{thm:proj}]
Let $2< p\le \infty$ and $1\le q<2$ with $\frac{1}{p}+\frac{1}{q}=1$. Using the contraction principle (e.g. \cite[Thm.~4.2.1]{DZ10}), we derive from Propositions~\ref{pro:continuity} and~\ref{pro:measures} that the sequence $(Z_{n,p})_{n\ge k}$ satisfies a large deviation principle in $\Zon_{k,q}$ with rate function
\begin{equation} \label{eq:rate-function}
I_p(K)
=\inf\{\IH_k(\mu)\colon K=Z_q(\mu)\}, \qquad K\in \Zon_{k,q}.
\end{equation}
Since $\Zon_{k,q}$ is closed and $I_p(K)=\infty$ for $K\in\cK^k\setminus \Zon_{k,q}$, the LDP extends to $\cK^k$, see \cite[Lem.~4.1.5]{DZ10}.

In the following, we rewrite the rate function in \eqref{eq:rate-function} to the one in Theorem~\ref{thm:proj}. Plugging in the definition of $\IH_k$ from \eqref{eq:stiefel-grf}, we have $I_p(K)<\infty$ precisely when $K=Z_q(\mu)$ for a probability measure $\mu\in \cP_q(\IR^k)$ which admits a Lebesgue density $f$ and satisfies $\Cov(\mu)\le I_{k\times k}$ as well as $H(\mu|\gamma^{\otimes k})<\infty$. In this case, we compute 
\[
\IH_k(\mu)
=-\Ent(\mu)+\frac{1}{2}\int_{\IR^k}\|x\|^2\dd\mu(x)+\frac{k}{2}(\log(2\pi)+1)-\frac{1}{2}\tr(\Cov(\mu)).
\]
Further, we note that
\[
\tr(\Cov(\mu))
=\int_{\IR^k}\|x\|^2\dd\mu(x)-\sum_{i=1}^{n}\Big(\int_{\IR^k}x_i\dd\mu(x)\Big)^2
=\int_{\IR^k}\|x\|^2\dd\mu(x)-\|\bc(\mu)\|^2,
\]
where $\bc(\mu)=\int_{\IR^k} x \, \dd\mu(x)$ is the barycenter of $\mu$. Together with $\Ent(\gamma^{\otimes k})=\frac{k}{2}\log(2\pi e)$ we arrive at
\[
\IH_k(\mu)
={\rm Ent}(\gamma^{\otimes k})-{\rm Ent}(\mu)+\frac{1}{2}\|\bc(\mu)\|^2.
\]
The symmetrized version $\tilde{\mu}$ of $\mu$ has Lebesgue density $\tilde{f}(x)=\frac{f(x)+f(-x)}{2}$, satisfies $K=Z_q(\tilde{\mu})$ and is centered, i.e. $\bc(\tilde{\mu})=0$. Further, by convexity of $x\mapsto x\log x$, it holds that
$
-{\rm Ent}(\tilde{\mu})
\le -{\rm Ent}(\mu),
$
and consequently $\IH_k(\tilde{\mu})\le \IH_k(\mu)$. Thus, we can put 
\[
I_p(K)=\inf\{\Ent(\gamma^{\otimes k})-{\rm Ent}(\mu)\colon \mu\in\cP_q(\IR^k) \text{ with } Z_q(\mu)=K \text{ and } \Cov(\mu)\le I_{k\times k}\}.
\]
Finally, rewrite $\Cov(\mu)\le I_{k\times k}$ to $Z_2(\mu)\subset \IB_2^k$. This completes the proof of Theorem~\ref{thm:proj}. 
\end{proof}

\section{Duality and the proof of Theorem~\ref{thm:section}}\label{sec:sec}

We use basic facts about duality and functions representing convex sets, see also \cite{Sch14}. Let $K\subset \R^k$ be a convex body such that $0\in \Int K$, that is $K\in \cK^{k}_{(o)}$. Then, its dual $K^\circ$ is also a convex body with $0\in \Int K^\circ$.  Recall that the support function of $K$ is given by
\[
h(K, u) = \sup_{x\in K} \langle x, u\rangle, \qquad u\in \bS^{k-1},
\]
and the radial function of $K$ is defined by
\[
\rho(K, u) = \sup\{r\ge 0: ru \in K\}, \qquad u\in \bS^{k-1}.
\]
It is known that there is a duality between the support function and the reciprocal radial function, namely
\[
h(K^\circ, u) = 1/ \rho(K, u), \qquad u\in \bS^{k-1}.
\]
It is clear that the radial function determines $K$ uniquely.  As a consequence of the above duality, the support function also determines $K$ uniquely. We have the following duality between sections and projections, see also \cite[Sec.~1.1.4]{AS17}.

\begin{proposition}\label{pro:dual}
Let $K\subset \R^k$ be a convex body such that $0\in \Int K$. Let also $L\subset \R^k$ be a linear subspace and $P_L$ be the orthogonal projection onto $L$. Then, the convex bodies $K^\circ\cap L$ and $P_L K$, considered within the ambient space $L$, are dual to each other.
\end{proposition}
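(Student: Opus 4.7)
The statement is a purely formal consequence of the duality between support and radial functions, once one observes how each operation interacts with the subspace $L$. The plan is to verify directly that, for every $u\in L\cap\Sp^{k-1}$, the support function of $P_LK$ and the radial function of $K^\circ\cap L$ (computed with $L$ as the ambient space) are reciprocals of each other, which characterizes polarity in $L$.

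\textbf{Step 1: Reduction to two identities.} In the ambient space $L$, the dual of a convex body $B\subset L$ with $0\in\Int B$ (relative to $L$) is characterized by $h(B^{\circ_L},u)=1/\rho(B,u)$ for every $u\in L\cap \Sp^{k-1}$. So it suffices to show
\[
h(P_LK,u)\cdot \rho(K^\circ\cap L,u)=1\quad\text{for all } u\in L\cap\Sp^{k-1},
\]
and to check that both $P_LK$ and $K^\circ\cap L$ are genuine convex bodies containing the origin in their relative interiors in $L$.

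\textbf{Step 2: The two identities.} First I would compute, for $u\in L$,
\[
h(P_LK,u)=\sup_{x\in K}\langle P_Lx,u\rangle=\sup_{x\in K}\langle x,P_Lu\rangle=\sup_{x\in K}\langle x,u\rangle=h(K,u),
\]
using self-adjointness of $P_L$ and $P_Lu=u$. Second, for $u\in L\cap \Sp^{k-1}$,
\[
\rho(K^\circ\cap L,u)=\sup\{r\ge 0:ru\in K^\circ\cap L\}=\sup\{r\ge 0:ru\in K^\circ\}=\rho(K^\circ,u),
\]
because the ray $\{ru:r\ge 0\}$ already lies in $L$. Combining these with the classical duality $h(K,u)\cdot \rho(K^\circ,u)=1$ (valid in $\IR^k$ because $0\in\Int K$) yields the required reciprocal relation.

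\textbf{Step 3: Nondegeneracy.} It remains to note that $P_LK$ and $K^\circ\cap L$ each contain a neighbourhood of $0$ within $L$: for $P_LK$ this is immediate since $P_L$ maps an open neighbourhood of $0$ in $\IR^k$ onto a neighbourhood of $0$ in $L$, and for $K^\circ\cap L$ it follows from $0\in\Int K^\circ$ by intersecting a small Euclidean ball with $L$. Both sets are convex and compact, so the reciprocity identity from Step~2 identifies them as polar duals in $L$. There is no real obstacle here beyond keeping track of the ambient space when computing support and radial functions; the argument is an exercise in unpacking definitions.
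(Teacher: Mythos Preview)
Your proof is correct and follows essentially the same approach as the paper: both arguments verify, for $u\in L\cap\Sp^{k-1}$, that $h(P_LK,u)=h(K,u)$ and $\rho(K^\circ\cap L,u)=\rho(K^\circ,u)$, and then invoke the standard duality $h(K,u)=1/\rho(K^\circ,u)$. Your version adds an explicit nondegeneracy check (Step~3), which the paper leaves implicit, but otherwise the arguments coincide.
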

\begin{proof}
If we want to indicate that the support function is considered with respect to the ambient space $L$, we will denote it by $h_L$.
For $u\in \bS^{k-1}\cap L$ we have
\begin{align*}
h_L(P_L K,u)
=
\sup_{x\in P_L K} \langle x, u\rangle
=
\sup_{x\in K} \langle x, u\rangle
=
h_{\R^k} (K, u).
\end{align*}
It follows that
\[
h_L(P_L K,u)
=
h_{\R^k} (K, u)
=
1/\rho(K^\circ, u)
=
1/\rho(K^\circ\cap L, u)
=
h_L((K^\circ\cap L)^\circ_L, u),
\]
where in the last term the subscript $L$ indicates that the dual is taken within $L$.
Since the support function determines the convex body uniquely, we conclude that $P_L K$ and $K^\circ\cap L$ are dual to each other within $L$.
\end{proof}

We recall that since the random $k$-dimensional subspace $E_{n,k}$ is uniformly distributed on the Grassmannian $\mathbb{G}_{n,k}$, we can put $E_{n,k}= \Pi_{n,k}(\IR^k)$ and then $\Pi_{n,k}^{\top}E_{n,k}= \IR^k$, where $\Pi_{n,k}$ is uniformly distributed on $\mathbb{V}_{n,k}$. Almost surely, $\Pi_{n,k}$ is of full rank, which we shall assume in the following.

Let $1\le q<2$. Proposition~\ref{pro:dual} and the duality between $\IB_{p}^n$ and $\IB_{q}^n$ imply
\[
(n^{1/q-1/2}\IB_q^n\cap E_{n,k})^{\circ}
=n^{1/p-1/2} P_{E_{n,k}}\IB_{p}^n,
\]
where the dual is taken with respect to the ambient space $E_{n,k}$.  Since $(\Pi_{n,k}(\IR^k))^{\perp}=\ker\Pi_{n,k}^{\top}$, it holds that $\Pi_{n,k}^{\top} P_{E_{n,k}}=\Pi_{n,k}^{\top}$, and we can apply the isometry $\Pi_{n,k}^{\top}\colon E_{n,k}\to \IR^k$, which commutes with duality, to both sides and arrive at
\[
(\Pi_{n,k}^{\top}(n^{1/q-1/2}\IB_q^n\cap E_{n,k}))^{\circ}
= n^{1/p-1/2} \Pi_{n,k}^{\top}\IB_{p}^n
=Z_{n,p}.
\]
Polarity being an involution, we obtain \eqref{eq:duality-distribution}, i.e., that $K_{n,q}=Z_{n,p}^{\circ}$, where $K_{n,q}$ is as in Theorem~\ref{thm:section}. 

In the following, we establish continuity of the polar operation, and for this we need that convergence in Hausdorff distance implies uniform convergence of radial functions.  

\begin{lem}\label{lem:hd-rad}
	Let $K,K_1,K_2,\dots\in \cK^k_{(o)}$. If $K_n\to K$, then the uniform convergence $\rho(K_n,\cdot)\to \rho(K,\cdot)$ holds on $\Sp^{k-1}$.
\end{lem}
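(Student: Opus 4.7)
The plan is to translate the problem into uniform convergence of Minkowski gauges $g_L(x) = \inf\{t > 0 : x \in tL\}$, which on $\Sp^{k-1}$ satisfy $g_L(u) = 1/\rho(L, u)$. Since $K \in \cK^k_{(o)}$, one can fix $0 < r_0 \le R$ with $r_0 \IB_2^k \subset K \subset R\,\IB_2^k$. I would first show that this containment propagates uniformly to the tail of the sequence: writing $\varepsilon_n := \delta_H(K_n, K) \to 0$, one gets $K_n \subset K + \varepsilon_n \IB_2^k \subset 2R\,\IB_2^k$ for $n$ large directly from the definition of Hausdorff distance, while $(r_0/2)\IB_2^k \subset K_n$ for $n$ large follows, e.g., by noting that $h(K_n,\cdot) \to h(K,\cdot)$ uniformly on $\Sp^{k-1}$ together with $h(K,u) \ge r_0$ for all $u$. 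In particular, $r_0/2 \le \rho(K_n, u) \le 2R$ and $r_0 \le \rho(K, u) \le R$ uniformly in $u$.

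Next, I would use the standard estimate that whenever $r\IB_2^k \subset L$, the gauge $g_L$ is sublinear and satisfies $g_L(z) \le \|z\|/r$. Fixing $u \in \Sp^{k-1}$, the point $\rho(K_n, u)\,u$ lies in $K_n \subset K + \varepsilon_n \IB_2^k$, so it equals $y+z$ with $y \in K$ and $\|z\| \le \varepsilon_n$. Applying $g_K$ and using positive homogeneity gives
\[
\frac{\rho(K_n, u)}{\rho(K, u)} \;=\; g_K(\rho(K_n, u)\,u) \;\le\; g_K(y) + g_K(z) \;\le\; 1 + \frac{\varepsilon_n}{r_0},
\]
so $\rho(K_n, u) - \rho(K, u) \le R\,\varepsilon_n/r_0$. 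The symmetric argument, starting from $\rho(K, u)\,u \in K \subset K_n + \varepsilon_n \IB_2^k$ and using the uniform lower inclusion $(r_0/2)\IB_2^k \subset K_n$, yields $\rho(K, u) - \rho(K_n, u) \le 4R\,\varepsilon_n/r_0$. Taking the supremum over $u \in \Sp^{k-1}$ and letting $n \to \infty$ finishes the proof.

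The only mildly delicate point is the uniform interior containment $(r_0/2)\IB_2^k \subset K_n$ for large $n$, since this is precisely what supplies a uniform Lipschitz-type constant for the gauges $g_{K_n}$ and hence the reverse inequality; everything else is a routine manipulation of sublinear functionals.
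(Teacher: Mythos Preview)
Your proof is correct and takes a genuinely different route from the paper's. The paper argues by contradiction via a geometric ``cone'' construction: it fixes $r,R$ with $B(0,2r)\subset K\subset B(0,R/2)$, notes that for large $n$ one has $B(0,r)\subset K_n\subset B(0,R)$, and then shows that if $|\rho(K_n,u_n)-\rho(K,u_n)|>\varepsilon$ at some direction, the boundary point of one body lies deep inside a cone $\mathrm{conv}(\{x\}\cup B(0,r))$ contained in the other, forcing $\delta_H(K_n,K)\ge \delta(r,R,\varepsilon)$. Your argument instead works directly with the Minkowski gauges $g_L$, exploiting their sublinearity and the bound $g_L(z)\le \|z\|/r$ when $r\IB_2^k\subset L$, together with the Hausdorff inclusions $K_n\subset K+\varepsilon_n\IB_2^k$ and $K\subset K_n+\varepsilon_n\IB_2^k$. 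This yields an explicit quantitative estimate $\sup_{u}|\rho(K_n,u)-\rho(K,u)|\le (4R/r_0)\,\delta_H(K_n,K)$, which is slightly stronger than what the paper's contradiction argument states and is arguably more elementary (no auxiliary geometric lemma about cones is needed). The paper's approach, on the other hand, is self-contained in a purely geometric language and makes the role of the boundary points more transparent.
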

\begin{proof}
Let $0<r<R<\infty$ be such that $B(0,2r)\subset K \subset B(0,R/2)$. Then for $n\ge n_0$ we have $B(0,r)\subset K_n \subset B(0,R)$. Let $\varepsilon>0$ be small and $\delta=\delta(r,R,\varepsilon)$ be such that for any $\alpha\in (\varepsilon,1)$ and $x\in B(0,R)$ we have
\[
B\Big(x-\alpha\frac{x}{\|x\|},2\delta\Big)\subset C_x:={\rm conv}(\{x\}\cup B(0,r)),
\]
see e.g. \cite[Ch.~3.2]{Wen04}. Intuitively, the cone $C_x$ is not too narrow away from its apex. Assume that $|\rho(K_n,u_n)-\rho(K,u_n)|>\varepsilon$ for some $u_n\in \Sp^{k-1}$ and $n\ge n_0$. It remains to show $\delta_H(K_n,K)\ge\delta$.

In fact, suppose without loss of generality that $r\le \rho(K_n,u_n)<\rho(K,u_n)\le R$. Further, $x_n:=\rho(K,u_n)u_n\in \partial K$ and $C_{x_n}\subset K$ as well as $y_n:=\rho(K_n,u_n)u_n\in \partial K_n$. Moreover, $y_n=x_n-\alpha_n\frac{x_n}{\|x_n\|}$ for some $\alpha_n=\rho(K,u_n)-\rho(K_n,u_n)>\varepsilon$. The ball $B(y_n,\delta)$ is contained in the interior of $C_{x_n}$ and thus of $K$. Therefore, $\delta_H(K_n,K)=\delta_H(\partial K_n, \partial K)\ge \delta$.
\end{proof}

\begin{lem}
	Polarity $\cdot^{\circ}\colon \cK^{k}_{(o)}\to \cK^k_{(o)}$ is continuous with respect to Hausdorff distance.	
\end{lem}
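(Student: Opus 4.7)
The plan is to chain together three ingredients that have already been set up: the duality identity $h(K^\circ,u)=1/\rho(K,u)$ for $u\in\Sp^{k-1}$, the isometry $I\colon\cK^k\to C(\Sp^{k-1})$ sending a body to its support function restricted to the unit sphere (which turns Hausdorff convergence into uniform convergence of support functions), and Lemma~\ref{lem:hd-rad} which upgrades Hausdorff convergence in $\cK^k_{(o)}$ to uniform convergence of radial functions.

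Concretely, I would fix $K\in\cK^k_{(o)}$ and a sequence $K_n\to K$ in $\cK^k_{(o)}$ with respect to $\delta_H$, and argue as follows. Since $K$ contains a Euclidean ball $B(0,2r)$ for some $r>0$, and $K_n\to K$ in Hausdorff distance, for all sufficiently large $n$ we have $B(0,r)\subset K_n$. Consequently the radial functions satisfy $\rho(K_n,u)\ge r$ uniformly in $u\in\Sp^{k-1}$, and similarly $\rho(K,u)\ge 2r$. By Lemma~\ref{lem:hd-rad}, $\rho(K_n,\cdot)\to\rho(K,\cdot)$ uniformly on $\Sp^{k-1}$. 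Uniform convergence of a bounded-below sequence of positive functions transfers to uniform convergence of the reciprocals, hence
\[
\frac{1}{\rho(K_n,\cdot)}\longrightarrow \frac{1}{\rho(K,\cdot)}
\quad\text{uniformly on }\Sp^{k-1}.
\]

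By the duality relation between support and radial functions recalled above, this is exactly the uniform convergence $h(K_n^\circ,\cdot)\to h(K^\circ,\cdot)$ on $\Sp^{k-1}$. Since the map $I\colon (\cK^k,\delta_H)\to (C(\Sp^{k-1}),\|\cdot\|_\infty)$ taking a convex body to its support function on the sphere is an isometry, this yields $\delta_H(K_n^\circ,K^\circ)\to 0$. Finally, $K^\circ\in\cK^k_{(o)}$ because $K\in\cK^k_{(o)}$, so the limit lies in the right space. I don't expect any serious obstacle, as each step is a direct appeal to a fact already available in the paper; the only mild care needed is the uniform lower bound on $\rho(K_n,\cdot)$, which is what lets the reciprocal pass through the uniform limit.
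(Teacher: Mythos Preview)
Your proposal is correct and follows essentially the same approach as the paper: invoke Lemma~\ref{lem:hd-rad} for uniform convergence of radial functions, use the uniform positive lower bound on $\rho(K_n,\cdot)$ to pass to reciprocals, and then identify these as support functions of the polars via the duality $h(K^\circ,\cdot)=1/\rho(K,\cdot)$ to conclude Hausdorff convergence. The paper's proof is slightly terser but the logic is identical.
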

\begin{proof}
	Assume $K,K_1,K_2,\dots$ are in $\cK^k_{(o)}$ and that $K_n\to K$ in Hausdorff distance. By Lemma~\ref{lem:hd-rad} we have the uniform convergence $\rho(K_n,\cdot)\to \rho(K,\cdot)$. Since $K\in \cK^k_{(o)}$, we find $r>0$ such that for large enough $n$ and all $u\in \Sp^{k-1}$, we have $\rho(K_n,u)\ge r$. Thus, we have uniform convergence of 
\[
h(K_n^{\circ},\cdot)
=\rho(K_n,\cdot)^{-1}
\to \rho(K,\cdot)^{-1}
= h(K^{\circ},\cdot),
\]
showing that $K_n^{\circ}\to K^{\circ}$ in Hausdorff distance.
\end{proof}

We are now ready to prove Theorem~\ref{thm:section}. 

\begin{proof}[Proof of Theorem~\ref{thm:section}]
	Theorem~\ref{thm:proj} implies that for $2<p\le \infty$ the sequence $(Z_{n,p})_{n\ge k}$ satisfies an LDP in $\cK^k$ with good rate function $I_p$ as defined in Theorem~\ref{thm:proj}. As noted in Remark~\ref{rem:finite-rate}, Proposition~\ref{pro:interior} implies that $\{K\in\cK^k\colon I_p(K)<\infty\}\subset \cK^k_{(o)}$. Further, $\cK^k_{(o)}\subset \cK^k$ is measurable by way of $\cK^{k}_{(o)}=\bigcup_{n=1}^{\infty}\{K\colon B(0,\frac{1}{n})\subset K\}$, where the sets in the union are open (see the proof of \cite[Lem.~1.8.18]{Sch14}). Thus, by \cite[Lem.~4.1.5]{DZ10} the sequence $(Z_{n,p})_{n\ge k}$ satisfies an LDP in $\cK^k_{(o)}$ with the same good rate function $I_p$.

Let $1\le q<2$ with $\frac{1}{p}+\frac{1}{q}=1$. Then by \eqref{eq:duality-distribution} it holds that $K_{n,q}=Z_{n,p}^{\circ}$. The contraction principle implies that the sequence $(K_{n,q})_{n\ge k}$ satisfies an LDP on $\cK^{k}_{(o)}$ with good rate function $J_q\colon \cK^{k}_{(o)}\to [0,\infty]$ defined by
\[
J_q(K)
=\inf\{I_p(L)\colon L\in \cK^k_{(o)} \text{ with }L^{\circ}=K\}
=I_p(K^{\circ}).
\]
This completes the proof of Theorem~\ref{thm:section}.
\end{proof}

\section{Maximum entropy and the proof of Theorem~\ref{thm:small-ball}}\label{sec:small-ball}

Let $2<p\le \infty$ and thus $1\le q<2$ with $\frac{1}{p}+\frac{1}{q}=1$. For $\beta> 0$ define the set 
\begin{equation} \label{eq:A-beta}
A_{\beta}=\{K\in \cK^k\colon K\subseteq \beta \IB_2^k\}.
\end{equation}
Observe that $A_{\beta}$ is closed with interior $\Int A_{\beta}=\{K\in \cK^k\colon K\subset  \beta \Int\IB_2^k\}$. Once we can show that, for $I_p$ as in Theorem~\ref{thm:proj} and $c_{k,q,\beta}$ as in Theorem~\ref{thm:small-ball}, 
\begin{equation} \label{eq:continuity-set}
\inf_{K\in \Int A_{\beta}}I_p(K)
=\inf_{K\in A_{\beta}}I_p(K)
=-c_{k,q,\beta},
\end{equation}
then the limit in Theorem~\ref{thm:small-ball} follows from the large deviation principle in Theorem~\ref{thm:proj} applied to the Borel set $A_{\beta}\subset \cK^k$. In the course of the proof of \eqref{eq:continuity-set}, we will give the proof of Proposition~\ref{pro:gamma}, which implies the inequality $\beta_{k,q}<m_q$ claimed in Theorem~\ref{thm:small-ball}. 

By the alternative representation \eqref{eq:alternative-rate} of the rate function $I_p$ we have 
\begin{equation} \label{eq:inf-ent-1}
\inf_{K\in A_{\beta}}I_p(K)
=\frac{k}{2}\log(2\pi e)-\sup_{\mu\in M_{\beta}}\Ent(\mu),
\end{equation}
where 
\begin{equation} \label{eq:M-beta}
M_{\beta}=\{\mu\in\cP_q(\IR^k)\colon  Z_q(\mu)\subset \beta \IB_2^k \text{ and } Z_2(\mu)\subset \IB_2^k\},
\end{equation}
which may also be written as the set of probability measures $\mu\in\cP_q(\IR^k)$ satisfying
\begin{align}\label{eq:constraints}
	\int_{\IR^k}|\langle x,u\rangle|^q \dd\mu(x)&\le \beta^q, \qquad \text{for all }u\in\Sp^{k-1},\notag\\
	\int_{\IR^k}|\langle x,u\rangle|^2 \dd\mu(x)&\le 1, \qquad \text{for all }u\in\Sp^{k-1}.
\end{align}
Analogously, we have
\begin{equation} \label{eq:inf-ent-2}
\inf_{K\in \Int A_{\beta}}I_p(K)
=\frac{k}{2}\log(2\pi e)-\sup_{\mu\in M_{\beta}'}\Ent(\mu),
\end{equation}
with
\[
M_{\beta}'
=\{\mu\in\cP_q(\IR^k)\colon  Z_q(\mu)\subset \beta \Int\IB_2^k \text{ and } Z_2(\mu)\subset \IB_2^k\},
\]
which can be written in terms of the constraints \eqref{eq:constraints} with the first inequality replaced by a strict inequality.

The proof strategy for showing \eqref{eq:continuity-set} is to compute the supremum $\sup_{\mu \in M_{\beta}}\Ent(\mu)$ and to show that this value does not change if $M_{\beta}$ is replaced by $M_{\beta}'$. The following proposition shows that the supremum of $\Ent(\mu)$ over $\mu\in M_{\beta}$ is actually attained and the maximizer is unique if the supremum is finite. In the latter case we call the maximizer the maximum entropy distribution in $M_{\beta}$. 

\begin{prop}\label{pro:attained}
	For any $\beta>0$ the set $M_{\beta}$ is compact in $q$-Wasserstein distance and the supremum $\sup_{\mu\in M_{\beta}}\Ent(\mu)$ is attained at some $\mu^*$. If the supremum is finite, then the maximizer is unique.
\end{prop}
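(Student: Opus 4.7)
The plan is to establish compactness of $M_\beta$ in the $q$-Wasserstein topology, then derive attainment of the supremum from upper semicontinuity of $\Ent$ via a reference-measure identity, and finally obtain uniqueness from strict concavity on the convex set $M_\beta$.

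For the compactness, observe that summing the constraint $Z_2(\mu)\subset \IB_2^k$ over an orthonormal basis yields the uniform bound $\int_{\IR^k}\|x\|_2^2 \dd\mu(x)\le k$ for every $\mu\in M_\beta$. This gives tightness via Markov and, since $q<2$, uniform integrability of $\|x\|_2^q$ via $\int_{\|x\|_2>R}\|x\|_2^q \dd\mu \le R^{q-2}\cdot k$. By the standard characterization, $M_\beta$ is relatively compact in $q$-Wasserstein. Closedness follows as in Proposition~\ref{pro:continuity}: the $L_q$-constraint passes to the limit because $\mu\mapsto\int|\langle x,u\rangle|^q \dd\mu$ is continuous on $\cP_q(\IR^k)$ uniformly in $u\in\Sp^{k-1}$, while the $L_2$ inequality passes by Fatou applied to weak convergence with the truncation $|\langle x,u\rangle|^2\wedge M$ followed by $M\to\infty$.

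For attainment, the key observation is the identity
\[
\Ent(\mu)=-H(\mu\,|\,\pi)+\tfrac{1}{q}\int_{\IR^k}\|x\|_2^q\dd\mu(x)+\log Z_\pi,
\]
where $\pi$ is the reference probability measure with Lebesgue density $Z_\pi^{-1}e^{-\|x\|_2^q/q}$. This holds for every $\mu\in\cP_q(\IR^k)$ admitting a Lebesgue density, and both sides equal $-\infty$ otherwise (any $\mu$ without density is singular with respect to $\pi$, forcing $H(\mu|\pi)=+\infty$). The first term on the right is upper semicontinuous in weak topology, since relative entropy with fixed second argument is lower semicontinuous, and the second is continuous on $\cP_q(\IR^k)$ by the very definition of $q$-Wasserstein convergence. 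Hence $\Ent$ is upper semicontinuous on $\cP_q(\IR^k)$; combined with compactness of $M_\beta$, the supremum is attained at some $\mu^*\in M_\beta$. A centered Gaussian with sufficiently small variance belongs to $M_\beta$ and has finite entropy, so $\sup_{M_\beta}\Ent>-\infty$; the classical bound $\Ent(\mu)\le\Ent(\gamma^{\otimes k})$ under $\Cov(\mu)\le I_{k\times k}$ shows the supremum is also finite.

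For uniqueness, note that $M_\beta$ is convex as it is cut out by linear-in-$\mu$ inequality constraints, while $\Ent$ is strictly concave on the convex subset of probability measures admitting a Lebesgue density: for densities $f_1\ne f_2$, strict concavity of $t\mapsto -t\log t$ gives $\Ent(\tfrac{\mu_1+\mu_2}{2})>\tfrac{1}{2}(\Ent(\mu_1)+\Ent(\mu_2))$. When the supremum is finite, any maximizer has finite entropy and hence admits a density, so two distinct maximizers would contradict maximality of their midpoint in $M_\beta$. The main subtlety is the upper semicontinuity of $\Ent$ in a topology weak enough for $M_\beta$ to be compact: a direct comparison with the Gaussian reference fails because $q$-Wasserstein convergence does not force convergence of second moments, and the reference-measure identity above is exactly what bypasses this obstacle.
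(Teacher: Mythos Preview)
Your proof is correct and follows the same three-step skeleton as the paper (compactness of $M_\beta$, upper semicontinuity of $\Ent$, strict concavity for uniqueness). The substantive difference is in the upper-semicontinuity step: the paper simply invokes \cite[Rem.~9.3.8]{AGS05}, whereas you give a self-contained argument via the identity $\Ent(\mu)=-H(\mu\,|\,\pi)+\tfrac{1}{q}\int\|x\|_2^q\,\dd\mu+\log Z_\pi$ with reference density $\pi\propto e^{-\|x\|_2^q/q}$. Your observation that a Gaussian reference would not work here---because $q$-Wasserstein convergence does not control second moments---is precisely the point, and makes the argument more transparent than a bare citation. For compactness, you spell out tightness, uniform $q$-integrability, and closedness of each constraint separately; the paper instead writes $M_\beta=C\cap Z_q^{-1}(A_\beta)$ with $C=\{\mu:Z_2(\mu)\subset\IB_2^k\}$, uses continuity of $Z_q$ from Proposition~\ref{pro:continuity}, and reduces to compactness of $C$ by citing \cite{KR18}. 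The uniqueness arguments coincide. You also go slightly beyond the stated proposition by noting that the supremum is in fact always finite (a small-variance Gaussian lies in $M_\beta$, and the Gaussian maximum-entropy inequality under $\Cov(\mu)\le I_{k\times k}$ gives the upper bound), a fact the paper leaves implicit at this stage.
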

\begin{proof}
For compactness, we observe that $M_{\beta}$ is the intersection of
\[
C
=\{\mu\in \cP_q(\IR^k)\colon Z_2(\mu)\subset \IB_2^k\}
=\{\mu\in \cP_q(\IR^k)\colon \Cov(\mu)\le I_{k\times k}\}
\]
with the preimage of the closed set $A_{\beta}$ under the continuous map $Z_q\colon \cP_{q}(\IR^k)\to \cK^k$ (see Proposition~\ref{pro:continuity}). Thus, it is sufficient to establish compactness of $C$. This is a straightforward generalization of \cite[Lem.~3]{KR18} (recall that $q<2$). In particular, for tightness observe that any $\mu\in C$ satisfies
\[
\int_{\IR^k} \|x\|^2 \dd\mu(x)
=\tr(\Cov(\mu))
\le k.
\]
Consequently, the set $M_{\beta}$ is compact. Considering entropy as a function
\[
\Ent\colon \cP_q(\IR^k)\to \IR\cup \{-\infty\},
\]
it is upper semi-continuous with respect to $q$-Wasserstein topology, see e.g.~\cite[Rem.~9.3.8]{AGS05}, and therefore attains its maximum on the compact convex set $M_{\beta}$. 

Let $\mu_1^*,\mu_2^*\in M_{\beta}$ be two maximizers of the entropy with $\Ent(\mu_1^*)=\Ent(\mu_2^*)<\infty$. Then $\mu_1^*$ and $\mu_2^*$ have Lebesgue densities $f_1^*$ and $f_2^*$. Consider a measure $\mu_3^*\in M_{\beta}$ with density $\frac{1}{2}(f_1^*+f_2^*)$. By strict concavity of $x\mapsto -x\log x$ we have $\Ent(\mu_3^*)>\Ent(\mu_1^*)$, which is a contradiction. Hence, the maximizer is unique.
\end{proof}

In the following, we compute the maximum entropy distribution $\mu^*$ in $M_{\beta}$ for $\beta>0$ and the corresponding entropy, which will be shown to be finite. Using rotational symmetry of $M_{\beta}$, the computation of $\sup_{\mu\in M_{\beta}}\Ent(\mu)$ will reduce to a one-dimensional problem with an explicit solution if $\beta\le \beta_{k,q}$ or $\beta\ge m_q$.

More precisely, for any orthogonal matrix $O$ in the orthogonal group $\mathbb{O}_k$, we have $\mu\in M_{\beta}$ if and only if $O\cdot \mu\in M_{\beta}$, where the rotated measure $O\cdot\mu$ assigns to a Borel set $A\subset \IR^k$ the mass
\[
(O\cdot \mu)(A)
:=\mu(\{O^{-1}x\colon x\in A\}).
\]
Moreover, the rotation invariance of $M_{\beta}$ transfers to the rotation invariance of the maximum entropy distribution $\mu^*$.

\begin{lem}\label{lem:max-invariant}
If a compact convex set $M\subset \cP_q(\IR^k)$ is invariant under rotations, then so is the maximum entropy distribution in $M$, provided that it has finite entropy.
\end{lem}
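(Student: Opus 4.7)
The plan is to combine the rotation invariance of differential entropy with the strict concavity of entropy on $\cP_q(\IR^k)$, which yields uniqueness of the maximizer and therefore its rotation invariance.

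First, I would record the change-of-variables observation: for any $O \in \mathbb{O}_k$ and $\mu \in \cP_q(\IR^k)$, the rotated measure $O \cdot \mu$ still lies in $\cP_q(\IR^k)$ (since $\|Ox\|_2 = \|x\|_2$ keeps the $q$-th moment unchanged), and $\Ent(O \cdot \mu) = \Ent(\mu)$ because $|\det O| = 1$ makes the Jacobian factor in the transformed density trivial. Combined with the assumed rotation invariance of $M$, this tells us that if $\mu^*$ maximizes entropy on $M$, then so does $O \cdot \mu^*$ for every $O \in \mathbb{O}_k$.

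Second, I would reprise the uniqueness argument from the last paragraph of the proof of Proposition~\ref{pro:attained}. Since $\Ent(\mu^*) < \infty$ is assumed, $\mu^*$ has a Lebesgue density; any other maximizer $\nu^*$ in $M$ with the same finite entropy also has a density, and convexity of $M$ places $\tfrac{1}{2}(\mu^* + \nu^*)$ in $M$. By strict concavity of $x \mapsto -x\log x$, the midpoint would have strictly greater entropy than $\mu^*$ unless $\mu^* = \nu^*$. Applying this uniqueness with $\nu^* = O \cdot \mu^*$ forces $O \cdot \mu^* = \mu^*$ for every $O \in \mathbb{O}_k$, which is exactly the claimed rotation invariance of $\mu^*$.

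The argument is essentially routine; the only point deserving mild care is to confirm that the uniqueness proof of Proposition~\ref{pro:attained} uses nothing beyond convexity of $M$ and strict concavity of entropy restricted to absolutely continuous measures, both of which are at our disposal in this abstract setting. I would not expect any genuine obstacle.
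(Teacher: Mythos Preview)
Your proposal is correct and follows essentially the same approach as the paper: invoke the uniqueness of the maximizer established in Proposition~\ref{pro:attained} (via convexity of $M$ and strict concavity of entropy), observe that $O\cdot\mu^*$ is also a maximizer for every $O\in\mathbb{O}_k$ by rotation invariance of entropy and of $M$, and conclude $O\cdot\mu^*=\mu^*$. The paper's version is terser, but your added justification that $\Ent(O\cdot\mu)=\Ent(\mu)$ and that the uniqueness argument only needs convexity of $M$ is a faithful elaboration of the same idea.
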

\begin{proof}
	Since $M$ is compact and convex, the maximizer of the entropy is unique, see the proof of Proposition~\ref{pro:attained}. If $\mu^*$ is a maximizer, then for every $O\in \mathbb{O}_k$ the measure $O\cdot \mu^*$ is also a maximizer, proving that $O\cdot \mu^*=\mu^*$.  
\end{proof}

By Lemma~\ref{lem:max-invariant}, provided that its entropy is finite, the maximum entropy distribution $\mu^*\in M_{\beta}$ is rotationally invariant, i.e. has a Lebesgue density $f\colon \IR^k\to \IR$ of the form $f\colon x\mapsto g(\|x\|)$ for $g\colon [0,\infty)\to [0,\infty)$ with
\[
1
=\int_{\IR^k}f(x)\dd x
=\omega_k\int_0^{\infty} r^{k-1}g(r)\dd r
=\int_0^{\infty}v(r)\dd r,
\]
where $v(r)=\omega_k r^{k-1}g(r)$. Again using polar integration, for any $q\ge 1$ and $u\in\Sp^{k-1}$,
\begin{equation} \label{eq:radial}
\int_{\IR^k}|\langle x,u\rangle|^q \dd\mu^*(x)
= a_{k,q}\int_{0}^{\infty}r^{q}v(r)\dd r,
\end{equation}
where
\begin{equation} \label{eq:akq}
	a_{k,q}
	:=\int_{\Sp^{k-1}}|\langle z,u\rangle|^q\dd\sigma(z)
=\frac{1}{\sqrt{\pi}}\frac{\Gamma(\frac{q+1}{2})\Gamma(\frac{k}{2})}{\Gamma(\frac{k+q}{2})},
\end{equation}
see e.g. \cite[p.~337]{AGM15}, \cite[Lem.~3.12]{Kol05} or \cite[p.~178]{LYZ04}. 

\begin{rem}\label{rem:akq}
We can now justify the final statement in Remark~\ref{rem:finite-rate}. As a special case of the above, if $I_p(\alpha\IB_2^k)<\infty$ for some $\alpha\in (0,1]$, then $\sup_{\mu\in \widetilde{M}_{\alpha}}\Ent(\mu)$ is finite, where the set $\widetilde{M}_{\alpha}=\{\mu\in\cP_q(\IR^k)\colon Z_q(\mu)=\alpha\IB_2^k\text{ and }Z_2(\mu)\subset \IB_2^k\}$ is compact, convex and invariant under rotations. By Lemma~\ref{lem:max-invariant}, we find a maximum entropy distribution $\mu^*\in \widetilde{M}_{\alpha}$ which gives rise to a density $v$ of the above form. It follows from \eqref{eq:radial} that $\alpha^q=a_{k,q}\int_0^{\infty}r^qv(r)\dd r$ and $1\ge a_{k,2}\int_0^{\infty}r^2v(r)\dd r$. Since $a_{k,2}=\frac{1}{k}$, Lyapunov's inequality gives $\alpha<a_{k,q}^{1/q}\sqrt{k}=\overline{a}_{k,q}m_q$, where $m_q$ is as in \eqref{eq:ball-min} and 
\[
\overline{a}_{k,q}
:=\bigg(\frac{\Gamma(\frac{k}{2})}{\Gamma(\frac{k+q}{2})}\bigg)^{1/q}\Big(\frac{k}{2}\Big)^{1/2}
=\bigg(\frac{\Gamma(\frac{k}{2}+1)}{\Gamma(\frac{k}{2}+\frac{q}{2})}\bigg)^{1/q}\Big(\frac{k}{2}\Big)^{1/2-1/q}
>1,
\]
by Gautschi's inequality for the Gamma function. Note that Lyapunov's inequality is strict since $v$ is a density with respect to Lebesgue measure.
\end{rem}

Rewriting the optimization problem $\sup_{\mu\in M_{\beta}}\Ent(\mu)$ in terms of $v$, we want to find a density $v$ on $[0,\infty)$ solving
\begin{align}\label{eq:max-problem}
	\max \qquad &-\int_{0}^{\infty}\log(v(r))v(r)\dd r+\int_0^{\infty}v(r) \log(r^{k-1})\dd r +\log(\omega_k)	\notag\\
\text{subject to }\quad  & \int_{0}^{\infty} r^q v(r)\dd r\le \frac{\beta^q}{a_{k,q}}\notag\\
 & \int_{0}^{\infty} r^2 v(r)\dd r\le k,
\end{align}
since $a_{k,2}=\frac{1}{k}$.  The solution $v^*$ to this optimization problem is known to be of the form
\begin{equation} \label{eq:max-ent}
v^*(r)=\frac{1}{Z}r^{k-1}\exp(-\lambda_1 r^q - \lambda_2 r^2), \qquad r>0,
\end{equation}
where $\lambda_1,\lambda_2\ge 0$ are such that $v^*$ satisfies the constraints in \eqref{eq:max-problem} and $Z$ is a normalizing constant. Moreover, we have the `complementary slackness' conditions
\begin{equation} \label{eq:slackness}
\Big(\int_0^{\infty}r^q v^*(r)\dd r- \frac{\beta^q}{a_{k,q}}\Big)\lambda_1=0\quad\text{ and }\quad
\Big(\int_0^{\infty}r^2 v^*(r)\dd r- k\Big)\lambda_2=0.
\end{equation}
This follows for example from the proof of \cite[Thm.~12.1.1]{CT06} (see also~\cite[Lem.~6]{KR18}). We refer to \cite{KR18} to a related geometric problem giving rise to a similar optimization problem.

Analogously, the optimization problem \eqref{eq:max-problem} without the second constraint is solved by $v^*$ of the form \eqref{eq:max-ent} with $\lambda_2=0$ and $\lambda_1>0$ chosen in such a way that the first constraint is satisfied with equality. The formula
\[
\int_0^{\infty}r^a e^{-b r^c}\dd r
=\frac{\Gamma(\frac{a+1}{c})}{c b^{\frac{a+1}{c}}},\qquad \text{ for } a\ge 0 \text{ and }b,c>0,
\]
yields that the optimal $v^*$ solving \eqref{eq:max-problem} without the second constraint is of the form
\begin{equation}\label{eq:opt-1}
v^*(r)=\frac{1}{Z_{k,q,\beta}}r^{k-1}e^{-\lambda_{k,q,\beta} r^q}
\quad\text{ with } Z_{k,q,\beta}=\frac{\Gamma(\frac{k}{q})}{q\lambda_{k,q,\beta}^{k/q}} \text{ and }\lambda_{k,q,\beta}=\frac{ka_{k,q}}{q \beta^q}.
\end{equation}
Moreover, it holds that
\[
\int_0^{\infty}r^2 v^*(r) \dd r 
=\frac{\Gamma(\frac{k+2}{q})}{\Gamma(\frac{k}{q})}\lambda_{k,q,\beta}^{-2/q}
=\frac{\Gamma(\frac{k+2}{q})}{\Gamma(\frac{k}{q})}\Big(\frac{ka_{k,q}}{q}\Big)^{-2/q}\beta^2,
\]
which is increasing in $\beta$ and equal to $k$ for the choice of $\beta=\beta_{k,q}$ as in Theorem~\ref{thm:small-ball}. Note that $\delta_{k,q}$ as in Theorem~\ref{thm:small-ball} satifies $\delta_{k,q}=\frac{k}{q}a_{k,q}$. In particular, if $\beta\le \beta_{k,q}$, then $ \int_0^{\infty}r^2 v^*(r) \dd r \le k$ and $v^*$ satisfies the constraints in \eqref{eq:max-problem} as well as maximizes entropy over all densities $v$ satisfying the first constraint with equality. It also solves the original problem in \eqref{eq:max-problem} since any solution $v^*$ to \eqref{eq:max-problem} with $\lambda_1,\lambda_2>0$ satisfies, by \eqref{eq:slackness}, both constraints with equality. The corresponding maximum entropy is given by
\[
-\int_{0}^{\infty}\log(v^*(r))v^*(r)\dd r+\int_0^{\infty}v^*(r) \log(r^{k-1})\dd r +\log(\omega_k)
=\frac{k}{q}+\log(Z_{k,q,\beta})+\log(\omega_k).
\]
Thus, we obtain the following partial solution to $\sup_{\mu\in M_{\beta}}\Ent(\mu)$.

\begin{lem}\label{lem:maximizer}
For any $0<\beta\le \beta_{k,q}$, the maximum entropy distribution $\mu^*$ in $M_{\beta}$ has Lebesgue density
\[
f^*\colon \IR^k\to \IR, \qquad x\mapsto\frac{1}{\omega_kZ_{k,q,\beta}} e^{-\lambda_{k,q,\beta} \|x\|^q},
\]
with $Z_{k,q,\beta}$ and $\lambda_{k,q,\beta}$ as above. It holds that
\[
\Ent(\mu^*)
=k\log \beta+\log\Gamma\Big(\frac{k}{q}\Big)+\frac{k}{q}(1-\log\delta_{k,q})-\log q+\log\omega_k,
\]
where $\delta_{k,q}=\frac{k}{q}a_{k,q}$ is as in Theorem~\ref{thm:small-ball}.  Moreover, the set $A_{\beta}$ defined in \eqref{eq:A-beta} is an $I_p$-continuity set, that is, the first equality in \eqref{eq:continuity-set} holds.
\end{lem}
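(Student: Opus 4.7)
The plan is to determine the maximum-entropy distribution $\mu^*$ in $M_\beta$ via reduction to a one-dimensional radial problem together with Lagrange duality, and then to deduce the $I_p$-continuity of $A_\beta$ by a simple scaling argument. By Proposition~\ref{pro:attained} the supremum $\sup_{\mu\in M_\beta}\Ent(\mu)$ is attained at some $\mu^*$, and $M_\beta$ is invariant under the natural action of $\mathbb{O}_k$ on $\cP_q(\IR^k)$. Hence, once finiteness of the supremum is established a posteriori by the final formula, Lemma~\ref{lem:max-invariant} forces $\mu^*$ to be rotationally symmetric. Writing its Lebesgue density as $f^*(x)=g^*(\|x\|_2)$ and setting $v^*(r)=\omega_k r^{k-1} g^*(r)$, identities~\eqref{eq:radial} and~\eqref{eq:akq} recast the problem as the one-dimensional optimization~\eqref{eq:max-problem}.

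Next, I would appeal to the KKT argument indicated after~\eqref{eq:max-problem}: any maximizer $v^*$ must take the Gibbs form~\eqref{eq:max-ent} for some multipliers $\lambda_1,\lambda_2\ge 0$ subject to the slackness relations~\eqref{eq:slackness}. For $0<\beta\le \beta_{k,q}$, I would verify that the choice $\lambda_2=0$, $\lambda_1=\lambda_{k,q,\beta}$ from~\eqref{eq:opt-1} is feasible for~\eqref{eq:max-problem}: the first constraint is tight by definition of $\lambda_{k,q,\beta}$, while the explicit Gamma-function identity for $\int_0^\infty r^2 v^*(r)\,\dd r$ recorded after~\eqref{eq:opt-1} is increasing in $\beta$ and equals $k$ precisely at $\beta=\beta_{k,q}$, hence lies in $[0,k]$ for $\beta\in(0,\beta_{k,q}]$. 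Since this $v^*$ solves the relaxed problem with only the first constraint and is feasible for the tighter problem~\eqref{eq:max-problem}, it solves the tighter problem as well. Substituting $f^*$ into $-\int f^*\log f^*\,\dd x$, using $\int \|x\|^q\,\dd\mu^*(x)=\beta^q/a_{k,q}$ from the tight first constraint together with the values of $Z_{k,q,\beta}$, $\lambda_{k,q,\beta}$ and the identity $\delta_{k,q}=k a_{k,q}/q$, a direct computation yields the claimed closed form for $\Ent(\mu^*)$; its finiteness retroactively justifies the use of Lemma~\ref{lem:max-invariant}.

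For the continuity statement, the key observation is that $M_\beta'$ approximates $M_\beta$ from within under dilations. Given $\mu\in M_\beta$ and $\alpha\in(0,1)$, let $\mu_\alpha$ denote the pushforward of $\mu$ under the map $x\mapsto \alpha x$. By positive homogeneity of the support function, $Z_q(\mu_\alpha)=\alpha Z_q(\mu)\subset \alpha\beta \IB_2^k\subset \beta\Int \IB_2^k$ and $Z_2(\mu_\alpha)=\alpha Z_2(\mu)\subset \IB_2^k$, so $\mu_\alpha\in M_\beta'$; moreover a Jacobian change of variables gives $\Ent(\mu_\alpha)=\Ent(\mu)+k\log\alpha$. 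Letting $\alpha\to 1^-$ and optimizing over $\mu$ yields $\sup_{\mu\in M_\beta'}\Ent(\mu)\ge \sup_{\mu\in M_\beta}\Ent(\mu)$, while the reverse inequality is immediate from $M_\beta'\subset M_\beta$. Together with~\eqref{eq:inf-ent-1} and~\eqref{eq:inf-ent-2}, this establishes the first equality in~\eqref{eq:continuity-set}.

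The main technical obstacle is the rigorous justification of the Gibbs form~\eqref{eq:max-ent}, since the maximization runs over an infinite-dimensional space of radial densities and a purely formal KKT argument is not automatic. A clean route is to rewrite $\Ent$ as $-H(\cdot\mid\nu)$ plus an additive constant for the reference probability measure $\nu\propto r^{k-1} e^{-\lambda_1 r^q-\lambda_2 r^2}\,\dd r$, and invoke the classical characterization of maximum-entropy distributions subject to moment constraints as the exponential-family representative; in the degenerate case $\lambda_2=0$ this is precisely~\cite[Thm.~12.1.1]{CT06}, already cited in the paper.
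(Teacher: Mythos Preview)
Your argument is correct and, for the identification of the maximizer and the entropy computation, coincides with the paper's: both reduce to the radial problem~\eqref{eq:max-problem} via Lemma~\ref{lem:max-invariant}, take the Gibbs ansatz~\eqref{eq:max-ent}, observe that for $\beta\le\beta_{k,q}$ the solution~\eqref{eq:opt-1} of the relaxed (single-constraint) problem is feasible for the full problem and hence optimal there as well, and finally plug in $Z_{k,q,\beta}$ and $\lambda_{k,q,\beta}$ to get the closed form for $\Ent(\mu^*)$.

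The one genuine difference is the $I_p$-continuity step. The paper argues that $\beta\mapsto\Ent(\mu^*)$ is continuous and increasing on $(0,\beta_{k,q}]$, notes that $M_\beta'=\bigcup_{\gamma<\beta}M_\gamma$ (since $Z_q(\mu)$ is compact), and concludes $\sup_{M_\beta'}\Ent=\sup_{\gamma<\beta}\sup_{M_\gamma}\Ent=\sup_{M_\beta}\Ent$. Your dilation argument is a cleaner alternative: pushing $\mu\in M_\beta$ forward under $x\mapsto\alpha x$ lands in $M_\beta'$ with $\Ent(\mu_\alpha)=\Ent(\mu)+k\log\alpha$, and letting $\alpha\uparrow 1$ gives the desired equality. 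Your version has the advantage of not relying on the explicit formula at all, so it works uniformly for every $\beta>0$ (in particular also on the interval $(\beta_{k,q},m_q)$, where the paper later has to repeat a continuity-in-$\beta$ argument without a closed form available). The paper's version, on the other hand, makes the monotonicity of the optimal value in $\beta$ explicit, which is itself a natural structural observation.
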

\begin{proof}
The form of the density of the maximum entropy distribution $\mu^*$ in $M_{\beta}$ is a consequence of the above discussion and the identity $f^*(x)=g(\|x\|)=\omega_k^{-1} \|x\|^{1-k}v^*(\|x\|)$, $x\in \IR^k\setminus\{0\}$. We compute 
\begin{align*}
\Ent(\mu^*)
&=\frac{k}{q}+\log(Z_{k,q,\beta})+\log(\omega_k)\\
&=k\log \beta+\log\Gamma\Big(\frac{k}{q}\Big)+\frac{k}{q}\Big(1-\log\frac{ka_{k,q}}{q}\Big)-\log q+\log\omega_k,
\end{align*}
which is continuous and increasing as a function of $\beta\in (0,\beta_{k,q}]$. This gives
\[
\sup_{\mu\in M_{\beta}'}\Ent(\mu)
=\sup_{\gamma<\beta}\sup_{\mu\in M_{\gamma}}\Ent(\mu)
=\sup_{\mu\in M_{\beta}}\Ent(\mu)
=\Ent(\mu^*),
\]
where $M_{\beta}'$ is as in \eqref{eq:inf-ent-2}. This proves the first equality in \eqref{eq:continuity-set} via \eqref{eq:inf-ent-1} and \eqref{eq:inf-ent-2}. 
\end{proof}

We can now set, for $\beta\le \beta_{k,q}$,
\[
c_{k,q,\beta}
:=k\log \beta-\frac{k}{2}\log(2\pi e)+\log\Gamma\Big(\frac{k}{q}\Big)+\frac{k}{q}(1-\log\delta_{k,q})-\log q+\log\omega_k
\]
to obtain, by \eqref{eq:inf-ent-1},
\[
\inf_{K\in A_{\beta}} I_p(K)
=\frac{k}{2}\log(2\pi e)-\Ent(\mu^*)
=-c_{k,q,\beta}.
\]
This completes the proof of Theorem~\ref{thm:small-ball} in the case of $\beta\le \beta_{k,q}$. 

Recall that the standard Gaussian distribution $\gamma^{\otimes k}$ maximizes entropy over the set $\{\mu\in \cP_q(\IR^k)\colon Z_2(\mu)\subset \IB_2^k\}$ with $\Ent(\gamma^{\otimes k})=\frac{k}{2}\log(2\pi e)$. For $\beta\ge m_q$, it follows from $Z_q(\gamma^{\otimes k})=m_q\IB_2^k\subset \beta \IB_2^k$ with $m_q$ as in \eqref{eq:ball-min} that $\gamma^{\otimes k}$ also maximizes entropy over $M_{\beta}$. Thus,
\[
\inf_{K\in A_{\beta}} I_p(K)=0\quad\text{for }\beta\ge m_q,
\]
which proves Theorem~\ref{thm:small-ball} in the case of $\beta\ge m_q$. 

Before we prove Theorem~\ref{thm:small-ball} in the remaining case $\beta\in (\beta_{k,q},m_q)$, let us prove Proposition~\ref{pro:gamma} which implies that indeed $\beta_{k,q}< m_q$ if $q\neq 2$. 

\begin{proof}[Proof of Proposition~\ref{pro:gamma}]
Let $x,y>0$ and set $x=\frac{1}{q}$ and $y=\frac{1}{p}$ with $0<q,p<\infty$. Let $\beta>0$. Consider the problem of finding a density $v$ on $[0,\infty)$ solving
\begin{align}\label{eq:max-problem-1}
	\max \qquad &\Ent(v)\notag\\
	\text{subject to }\quad  & m_q(v)\le \beta\tag{P($q,\beta$)}
\end{align}
where $\Ent(v)=-\int_{0}^{\infty}\log(v(r))v(r)\dd r$ and $m_q(v):=\int_{0}^{\infty} r^q v(r)\dd r$. We say that $v$ is feasible for P($q,\beta$) if it satisfies the constraint $m_q(v)\le \beta$. Similar to before, the solution $v^*$ to the problem P($q,\beta$) is of the form
\begin{equation} 
	v^*(r)=\frac{\lambda^{1/q}}{\Gamma(\frac{q+1}{q})}e^{-\lambda r^q}, \qquad r>0,
\end{equation}
where $\lambda> 0$ is such that $v^*$ satisfies $m_q(v^*)= \beta$. Similar computations as above show that
\[
v_1^*(r)=\frac{1}{q^{1/q}\Gamma(\frac{q+1}{q})\beta^{1/q}} e^{- \frac{r^q}{q\beta}} \text{ and }v_2^*(r)=\frac{1}{p^{1/p}\Gamma(\frac{p+1}{p})}e^{- \frac{r^p}{p}}, \qquad r>0,
\]
solve the problems P($q,\beta$) and P($p,1$), respectively. It holds that
\[
m_p(v_1^*)
=\frac{\Gamma(\frac{p+1}{q})}{\Gamma(\frac{1}{q})}q^{p/q}\beta^{p/q}
\quad\text{and}\quad
m_q(v_2^*)
=\frac{\Gamma(\frac{q+1}{p})}{\Gamma(\frac{1}{p})}p^{q/p}
\]
and we see that $m_p(v_1^*)\le 1$ holds precisely when 
\[
\beta
\le \beta_{p,q}
:=\frac{1}{q}\bigg(\frac{\Gamma(\frac{1}{q})}{\Gamma(\frac{p+1}{q})}\bigg)^{q/p}.
\]
In particular, the solution $v_1^*$ to P($q,\beta_{p,q}$) is feasible for P($p,1$) and satisfies $\Ent(v_1^*)\le \Ent(v_2^*)$. Suppose that $m_q(v_2^*)\le \beta_{p,q}$. Then $v_2^*$ is feasible for P($q,\beta_{p,q}$) and satisfies $\Ent(v_2^*)\le \Ent(v_1^*)$. Consequently, we must have that $\Ent(v_1^*)=\Ent(v_2^*)$ and both $v_1^*$ and $v_2^*$ solve both P($q,\beta_{p,q}$) and P($p,1$). This can only be true if $p=q$ and then in fact $m_q(v_2^*)=\beta_{p,q}$. Thus, if $p\neq q$ we must have $\beta_{p,q}< m_q(v_2^*)$, i.e. that
\[
\frac{\frac{1}{q}\Gamma(\frac{1}{p})}{\Gamma(\frac{q+1}{p})}\bigg(\frac{\frac{1}{p}\Gamma(\frac{1}{q})}{\Gamma(\frac{p+1}{q})}\bigg)^{q/p}< 1,
\]
which is the inequality in Proposition~\ref{pro:gamma} for $x=\frac{1}{q}$ and $y=\frac{1}{p}$ with $x\neq y$. This completes the proof.
\end{proof}

In the case of $\beta\in (\beta_{k,q},m_q)$ the solution $v^*$ to \eqref{eq:max-problem} is of the form \eqref{eq:max-ent} with $\lambda_1,\lambda_2>0$. For if not, then either $\lambda_1$ or $\lambda_2$ were equal to zero and the corresponding constraint in \eqref{eq:max-problem} has to be satisfied. 

We observe that the optimization problem in \eqref{eq:max-problem} without the first constraint is solved by
\begin{equation} \label{eq:opt-2}
v^*(r)=\frac{2}{\Gamma(\frac{k}{2})2^{k/2}}r^{k-1}e^{-r^2/2},
\end{equation}
which corresponds to $\gamma^{\otimes k}$ and also satisfies the first constraint since 
\[
\int_{0}^{\infty}r^q v^*(r)\dd r
=\frac{1}{a_{k,q}}\int_{\IR^k}|\langle x,u\rangle|^q\dd\gamma^{\otimes k}(x)
=\frac{m_q^q}{a_{k,q}}
\le \frac{\beta^q}{a_{k,q}}.
\]
Arguing as above, this choice of $v^*$ solves also the original problem in \eqref{eq:max-problem}.

The solutions corresponding to either $\lambda_1$ or $\lambda_2$ equal to zero are given by \eqref{eq:opt-1} and \eqref{eq:opt-2}, which for $\beta\in (\beta_{k,q},m_q)$ do not satisfy the corresponding other constraint, and are thus not solutions of \eqref{eq:max-problem}. Thus, $\lambda_1$ and $\lambda_2$ must be positive. Again, let $\mu^*$ be the measure with density $x\mapsto g(\|x\|)$, where $g(\|x\|)=\omega_k^{-1} \|x\|^{1-k}v^*(\|x\|), x\in\IR^k\setminus\{0\}$. Then 
\[
\Ent(\mu^*)
=-\int_{0}^{\infty}\log(v^*(r))v^*(r)\dd r+\int_0^{\infty}v^*(r) \log(r^{k-1})\dd r +\log(\omega_k)
\]
is nondecreasing in $\beta$, as this enlarges the set of measures satisfying the constraints, and depends continuously on $\beta$. Therefore, an argument as in the proof of Lemma~\ref{lem:maximizer} yields that $A_{\beta}$ is an $I_p$-continuity set also for $\beta\in (\beta_{k,q},m_q)$, i.e., we have
\[
\inf_{K\in A_{\beta}} I_p(K)
=\frac{k}{2}\log(2\pi e)-\Ent(\mu^*)
=-c_{k,q,\beta},
\]
for some constant $c_{k,q,\beta}<0$. This completes the proof of Theorem~\ref{thm:small-ball}. We do not investigate the value of this constant as this requires a more involved computation.

\begin{rem}
One can also consider computing the asymptotics of $\IP[\alpha\IB_2^k\subset Z_{n,p}]$	as $n\to\infty$ with some $\alpha\in (0,a_{k,q}^{1/q}\sqrt{k})$, where the restriction is due to Remark~\ref{rem:akq}. This lends itself to a similar analysis and leads to problem \eqref{eq:max-problem} with the first constraint replaced by 
\begin{equation} \label{eq:mod-constraint}
\int_{0}^{\infty}r^qv(r)\dd r\ge \frac{\alpha^q}{a_{k,q}}.
\end{equation}
The solution $v^*$ to this modified optimization problem is of the same form as in \eqref{eq:max-ent} but with $\lambda_1\le 0$ and $\lambda_2\ge 0$ such that $v^*$ satisfies the modified constraints. Due to integrability of $v^*$, we must have $\lambda_2>0$ and equality in the second constraint. If $\lambda_1=0$, then we arrive at the density in \eqref{eq:opt-2}, which corresponds to the Gaussian distribution $\gamma^{\otimes k}$ and can only satisfy \eqref{eq:mod-constraint} if $\alpha\le m_q$. In this case, it holds that
\[
\lim_{n\to\infty}n^{-1}\log\IP[\alpha\IB_2^k\subset Z_{n,p}]=0.
\]
If $\alpha\in (m_q,a_{k,q}^{1/q}\sqrt{k})$, then necessarily $\lambda_1<0$ and $\lambda_2>0$ in \eqref{eq:max-ent} and we do not compute the limit since this requires a more involved computation.
\end{rem}

\subsection*{Acknowledgement}

MS is grateful for the hospitality of the Institute of Mathematical Stochastics, University of M\"unster, during a long-term visit. This research was funded in whole or in part by the Austrian Science Fund (FWF) [Grant DOI: 10.55776/P32405; 10.55776/J4777]. ZK was supported by the German Research Foundation under Germany's Excellence Strategy  EXC 2044 -- 390685587, \textit{Mathematics M\"unster: Dynamics - Geometry - Structure} and by the DFG priority program SPP 2265 \textit{Random Geometric Systems}.  For open access purposes, the authors have applied a CC BY public copyright license to any author-accepted manuscript version arising from this submission.

\bibliographystyle{abbrv}
\bibliography{shadows}

\end{document}